\documentclass[10pt]{amsart}
\pdfoutput=1
\usepackage{amsmath}
\usepackage{amsfonts}
\usepackage{amssymb}
\usepackage{amsthm, enumerate, enumitem}
\usepackage{hyperref}
\usepackage{comment, enumitem}
\usepackage{float}
\usepackage{youngtab}
\usepackage{ytableau}
\usepackage{rotating}
\usepackage{subcaption}
\usepackage{xcolor}
\usepackage{bbding}
\usepackage{tikz}
\usepackage{tikz-cd}
\usepackage[paper=a4paper, margin=3.5cm]{geometry}

\def\NN{{\NZQ N}}

\def\RR{{\NZQ R}}
\def\CC{{\NZQ C}}

\def\PP{{\NZQ P}}

\def\frk{\frak}               

\def\Phi{{\frk n}}
\def\Phi{{\frk N}}
\def\MI{{\mathcal I}}

\def\opn#1#2{\def#1{\operatorname{#2}}} 
\opn\chara{char} \opn\length{\ell} \opn\pd{pd} \opn\rk{rk}
\opn\projdim{proj\,dim} \opn\injdim{inj\,dim} \opn\rank{rank}
\opn\spn{span}\opn\Seg{Seg}
\opn\depth{depth} \opn\grade{grade} \opn\height{height}
\opn\embdim{emb\,dim} \opn\codim{codim}

\opn\Tr{Tr} \opn\bigrank{big\,rank}
\opn\superheight{superheight}\opn\lcm{lcm}
\opn\trdeg{tr\,deg}
\opn\reg{reg} \opn\lreg{lreg} \opn\ini{in} \opn\lpd{lpd}
\opn\size{size}\opn\bigsize{bigsize}
\opn\cosize{cosize}\opn\bigcosize{bigcosize}
\opn\sdepth{sdepth}\opn\sreg{sreg}
\opn\link{link}\opn\fdepth{fdepth} \opn\trdeg{trdeg} \opn\mod{mod}
\opn\spann{span}
\opn\div{div} \opn\Div{Div} \opn\cl{cl} \opn\Cl{Cl}
\opn\Spec{Spec} \opn\Supp{Supp} \opn\supp{supp} \opn\Sing{Sing}
\opn\Ass{Ass} \opn\Min{Min}\opn\Mon{Mon} \opn\dstab{dstab} \opn\astab{astab}
\opn\Syz{Syz}

\opn\Ann{Ann} \opn\Rad{Rad} \opn\Soc{Soc} \opn\Aut{Aut}
\opn\Im{Im} \opn\Ker{Ker} \opn\Coker{Coker} \opn\Am{Am}
\opn\Hom{Hom} \opn\Tor{Tor} \opn\Ext{Ext} \opn\End{End}
\opn\Aut{Aut} \opn\id{id}

\opn\nat{nat}
\opn\pff{pf}
\opn\Pf{Pf} \opn\GL{GL} \opn\SL{SL} \opn\mod{mod} \opn\ord{ord}
\opn\Gin{Gin} \opn\Hilb{Hilb}\opn\sort{sort}
\opn\S{S} \opn\dim{dim} \opn\supp{supp}\opn\trdeg{trdeg}\opn\sort{sort}
\opn\aff{aff} \opn\con{conv} \opn\relint{relint} \opn\st{st}
\opn\lk{lk} \opn\cn{cn} \opn\core{core} \opn\vol{vol}
\opn\link{link} \opn\star{star}\opn\lex{lex}
\opn\conv{conv} \opn\Ehr{Ehr}\opn\Pic{Pic}
\opn\Conv{Conv}
\opn\ML{ML-deg}\opn\Id{Id}
\opn\gr{gr}

\def\pot#1#2{#1[\kern-0.28ex[#2]\kern-0.28ex]}

\opn\dirlim{\underrightarrow{\lim}}
\opn\inivlim{\underleftarrow{\lim}}
\def\Implies{\ifmmode\Longrightarrow \else
        \unskip${}\Longrightarrow{}$\ignorespaces\fi}
\def\implies{\ifmmode\Rightarrow \else
        \unskip${}\Rightarrow{}$\ignorespaces\fi}
\def\iff{\ifmmode\Longleftrightarrow \else
        \unskip${}\Longleftrightarrow{}$\ignorespaces\fi}

\let\:=\colon

\newtheorem{Theorem}{Theorem}[section]
 \newtheorem{Lemma}[Theorem]{Lemma}
 \newtheorem{Corollary}[Theorem]{Corollary}
 \newtheorem{Proposition}[Theorem]{Proposition}
 \newtheorem{Remark}[Theorem]{Remark}

 \newtheorem{Example}[Theorem]{Example}
 
 \newtheorem{Definition}[Theorem]{Definition}

 \newtheorem{Question}[Theorem]{Question}

\DeclareMathOperator{\tr}{tr}

\DeclareMathOperator{\diag}{diag}

\def\CC{\mathbb{C}}

\def\RR{\mathbb{R}}

\def\PP{\mathbb{P}}
\def\NN{\mathbb{N}}

\title{Self-inverse linear subspaces of matrices}
\subjclass[2020]{15A30, 15A21, 17C99, 62H22}
\keywords {linear subspace of matrices, self-inverse subspace, minimal polynomial, Jordan algebra}
 
 \author{Rodica Andreea Dinu}
\address{%
	 Simion Stoilow Institute of Mathematics of the Romanian Academy, Calea Grivitei 21, 010702, Bucharest, Romania}
	\email{rdinu@imar.ro}

\author{Martin Vodi\v{c}ka}
\address{Šafárik University, Faculty of Science, Jesenná 5, 04154 Košice, Slovakia}
	\email{martin.vodicka@upjs.sk}

\begin{document}

\maketitle

\begin{abstract}
    We study linear subspaces of matrices whose inverse spaces are also linear. Based on the fact that any linear space containing the identity matrix and whose inverse space is linear must be a self-inverse space, we introduce such spaces as self-inverse spaces. In fact, as we will show, self-inverse spaces are finite-dimensional complex unitary Jordan algebras. We provide an algebraic classification of all self-inverse spaces of small type and a classification of small-dimensional self-inverse spaces up to isomorphism.
\end{abstract}

\section{Introduction}
The structural study of matrix subspaces is a cornerstone of modern linear algebra, with important implications for algebraic geometry, quantum information and algebraic statistics.

While a matrix $A$ is classically known as involutory if $A=A^{-1}$, this paper shifts the focus toward a more global structural property: the characterization and classification of linear subspaces of matrices $L$ such that the set of inverses of all regular matrices within $L$, denoted $L^{-1}$, also forms a linear space.

Our investigation establishes a fundamental connection: any linear space $L$ containing the identity matrix and satisfying the condition that $L^{-1}$ is a linear space must be a self-inverse subspace, i.e., $L=L^{-1}$. Based on this fact, we call such spaces \textit{self-inverse} spaces. We establish key structural and algebraic properties of these subspaces, demonstrating that they are closed under matrix powers and a symmetric product operation. As we will show, self-inverse spaces are precisely Jordan subalgebras of the Jordan algebra $\CC^{n\times n}$ which contains an identity. We also refer the reader to \cite{jensen} and \cite{jordan}.
We proved that there is a close relationship between self-inverse spaces and the root multiplicities of the minimal polynomials of their elements. By using this fact, we provide a classification of all self-inverse spaces of small type and a classification of all small-dimensional self-inverse subspaces up to an isomorphism. 

The motivation for this study is deeply rooted in \cite{3264}, where Bernd Sturmfels poses critical problems regarding the degree of the inversion map and the geometry of spaces where the inverse space is also linear.

In algebraic statistics, a central problem in Gaussian graphical models is the relationship between the covariance matrix $\Sigma$ and the concentration matrix $K=\Sigma^{-1}$, see, for e.g. \cite{jensen} and \cite{STZ}. Statistical constraints often require $K$ to lie within a specific linear subspace $L$ defined by the absence of edges in a graph, see \cite{lauritzen} and \cite{jensen}. 
A self-inverse subspace represents a symmetric statistical scenario where both the covariance and concentration matrices belong to the same linear space (i.e., $K\in L \Rightarrow K^{-1}\in L$). Such models provide scenarios where the Maximum Likelihood Estimation (MLE) exhibits unique algebraic properties, such as a rational ML estimator or a specific ML degree.  

By classifying self-inverse spaces, we are essentially finding the building blocks of these statistical models. For instance, the space of symmetric $2\times 2$ matrices $(S_2)$ and the space of upper-triangular $2\times 2$ matrices $(U_2)$ identified in our dimension 3 classification, correspond to specific constraints in statistical interactions. Understanding these subspaces allows for a clearer view of how internal matrix symmetries (captured by the minimal polynomial) translate into global model constraints.

When the matrices are symmetric, a study of linear spaces whose inverse space is a linear space was done by Bik, Henrik, and Sturmfels in \cite{jordan}. A structure result for Euclidean Jordan algebras (so, over $\RR$) appeared already in 1934, due to Jordan, von Neumann, and Wigner in \cite{jordan_mechanics}. Results that generalize their work are due to Zel'manov, in the case of simple and prime Jordan algebras, but in infinite dimensions \cite{zelmanov1, zelmanov2, zelmanov4}.
In addition, Jordan algebras are related to matroids: they can be viewed as a nonabelian generalization of partition matroids, i.e., direct sums of uniform matroids, \cite[Remark 3.7]{jordan}.

This paper is organized as follows. In Section~\ref{foundation}, we establish the foundational lemmas, proving that any linear space containing the identity with a linear inverse space is closed under all non-negative matrix powers, ultimately showing that such spaces are self-inverse. In fact, some of these properties appear in a different shape in the literature; we refer here to \cite{jensen} and, for symmetric matrices, \cite{jordan}. However, our approach is different, and we present them for the completeness of the article.

In Section~\ref{jordan}, we study several properties of self-inverse spaces of matrices in relation to Jordan algebras. It is already known a decomposition result for Jordan algebras, namely the so-called {\em Peirce decomposition}, \cite{taste}. However, the uniqueness of the decomposition is not that clear in the literature and we prove in Theorem~\ref{uniqueness}, the fact that there is a unique way to write a Jordan algebra as a product of its irreducible Jordan subalgebras.

In Section~\ref{sectype} we extend the concept of minimal polynomial for matrices to self-inverse spaces and, implicitly, to Jordan algebras. In Theorem~\ref{generated-by-single-matrix}, we prove that the spaces generated by two matrices are isomorphic if and only if their minimal polynomials have the same number of roots with the same multiplicities, which shows that the minimal polynomial of a matrix plays an important role in understanding the self-inverse spaces of matrices. Based on this fact, we introduce an algebraic variety defined by fixed minimal polynomial types.
In Section~\ref{classification_type}, we fix the type of the Jordan algebra, hence the type of minimal polynomial of its generic element. We give a classification of self-inverse spaces of small type.
Section~\ref{classification_dim} provides a comprehensive algebraic classification of low-dimensional self-inverses spaces. Specifically, we identify exactly one unique space in dimension one, two spaces in dimension two (in Theorem~\ref{dim2}), six spaces in dimension three (in Theorem~\ref{dim3}), and sixteen spaces in dimension four (in Theorem~\ref{dim4}). Finally, we propose several open problems regarding higher-dimensional classifications, the finiteness of these spaces, and their orbits under the action of the general linear group.

\section{Linear spaces of matrices with linear inverse space}\label{foundation}

We start the study of linear spaces of matrices with linear inverse spaces by proving several foundational lemmas. They will lead to a generalization of \cite[Theorem 1.1]{jordan} corresponding to the symmetric case matrices.

\begin{Lemma}\label{polynomial-lemma}
Let $f\in\CC[x]$ be a polynomial of degree $k$ and let $R=\CC[x]/(f)$. Assume that $a_1,\dots,a_k\in\CC$ are different numbers that are not roots of $f$. Then $(x-a_1)^{-1},(x-a_2)^{-1},\dots,(x-a_k)^{-1}$ generate $R$ as a vector space. The inverses are considered in the ring $R$.
\end{Lemma}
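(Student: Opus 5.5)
Since $\dim_{\CC} R = k$, it suffices to show that the $k$ elements $(x-a_1)^{-1},\dots,(x-a_k)^{-1}$ are linearly independent in $R$. First, each inverse exists in $R$: since $f(a_i)\neq 0$, the polynomials $x-a_i$ and $f$ are coprime, so $x-a_i$ is a unit in $R=\CC[x]/(f)$. Concretely, write $f(x)=f(a_i)+(x-a_i)g_i(x)$ with $\deg g_i=k-1$. Then in $R$ we have $(x-a_i)g_i(x)=-f(a_i)$, so
\[
(x-a_i)^{-1} = -\frac{g_i(x)}{f(a_i)}, \qquad g_i(x)=\frac{f(x)-f(a_i)}{x-a_i}.
\]
The polynomials $g_i$ have degree $k-1<k$, so they are reduced representatives in $R$. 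Linear independence in $R$ is therefore the same as linear independence of $g_1,\dots,g_k$ in $\CC[x]_{\le k-1}$.

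Now suppose $\sum_i c_i (x-a_i)^{-1}=0$ in $R$. Multiplying by $P(x)=\prod_j (x-a_j)$, which is a unit in $R$, gives
\[
\sum_i c_i \prod_{j\neq i}(x-a_j) \equiv 0 \pmod f .
\]
The left-hand side has degree at most $k-1<\deg f$, so it is the zero polynomial. Evaluating at $x=a_i$ gives $c_i\prod_{j\ne i}(a_i-a_j)=0$. Since the $a_j$ are distinct, this forces $c_i=0$ for every $i$. Hence the $k$ inverses are linearly independent and therefore span $R$.

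There is no real obstacle here. The only point needing care is the degree argument, namely that a polynomial of degree less than $k$ vanishing in $R$ must be zero. Equivalently, one uses that multiplication by the unit $P$ is an automorphism of $R$ sending the inverses to the Lagrange basis polynomials $\prod_{j\ne i}(x-a_j)$, which are independent because they interpolate at the distinct points $a_i$.
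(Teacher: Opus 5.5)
Your proof is correct and follows the paper's argument essentially verbatim. Like the paper, you reduce to linear independence using $\dim R = k$, multiply by $\prod_j (x-a_j)$, note that the resulting polynomial of degree at most $k-1$ vanishing in $R$ must be zero in $\CC[x]$, and evaluate at each $a_i$; your explicit formula for $(x-a_i)^{-1}$ via $g_i$ usefully confirms the inverses exist but is not needed.
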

\begin{proof}
It is sufficient to show that these elements are linearly independent. Assume $$\alpha_1(x-a_1)^{-1}+\alpha_2(x-a_2)^{-1}+\dots+\alpha_k(x-a_k)^{-1}=0$$
for some $\alpha_i\in\CC$.
We can multiply this equation by $(x-a_1)(x-a_2)\dots (x-a_k)$ and obtain $$\alpha_1(x-a_2)(x-a_3)\dots (x-a_k)+\dots +\alpha_k(x-a_1)(x-a_2)\dots(x-a_{k-1})=0.$$

Note that the polynomial on left-hand side is of degree $k-1$, therefore this equation holds also in $\CC[x]$. Thus we may plug in $x=a_i$ to obtain $\alpha_i=0$ which proves the linear independence.
\end{proof}

\begin{Lemma}\label{R<L}
Let $L$ be a linear space of matrices such that $\Id\in L$. Assume that $L^{-1}$ is a linear space. For any $A\in L$ and any $k\in \NN$, we have $A^k\in L$.
\end{Lemma}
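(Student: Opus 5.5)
The idea is to push inverses of elements of $L$ through the linear space $L^{-1}$ and back, so that $L$ ends up containing the whole commutative algebra $\CC[A]$. Lemma~\ref{polynomial-lemma} supplies the key fact: in that algebra, inverses of shifts $A-a\Id$ span everything.

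Fix $A\in L$ and let $f$ be its minimal polynomial, of degree $k$. Then $\CC[A]\cong R=\CC[x]/(f)$ via $x\mapsto A$, and for $a$ not an eigenvalue of $A$ the matrix $A-a\Id$ is invertible with $(A-a\Id)^{-1}\in\CC[A]$, corresponding to $(x-a)^{-1}\in R$. Since $\Id\in L$, every $A-a\Id$ lies in $L$, so every $(A-a\Id)^{-1}$ with $a$ outside the spectrum lies in $L^{-1}$.

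Choose $k$ distinct numbers $a_1,\dots,a_k$ that are not eigenvalues of $A$. By Lemma~\ref{polynomial-lemma}, the elements $(A-a_i\Id)^{-1}$ span $\CC[A]$. Because $L^{-1}$ is a linear space, this gives $\CC[A]\subseteq L^{-1}$. I also want $L^{-1}$ to be a linear space containing $\Id=\Id^{-1}$. That is immediate, since $\Id\in L$.

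Now I apply the same reasoning to $L^{-1}$ in place of $L$, noting that $(L^{-1})^{-1}=L$: the inverses of the invertible elements of $L^{-1}$ are exactly the invertible elements of $L$. Here I must be careful, because $L$ itself need not equal $(L^{-1})^{-1}$ as a set, since $L$ contains singular matrices. The inverse set of $L^{-1}$ is the set of invertible matrices of $L$, and this set lies in $L$. So the correct claim is only that this inverse set lies inside $L$, and the argument only needs inverses of specific invertible elements of $L^{-1}$ to lie in $L$.

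Concretely, for $a$ not an eigenvalue, $B_a=(A-a\Id)^{-1}$ is an invertible element of $\CC[A]\subseteq L^{-1}$. For $b$ not an eigenvalue of $B_a$, the matrix $B_a-b\Id$ lies in $L^{-1}$, which is a linear space containing $\Id$. If it is invertible, it is the inverse of some element of $L$, so $(B_a-b\Id)^{-1}\in L$. Applying Lemma~\ref{polynomial-lemma} to $B_a$ and using the linearity of $L$, I get $\CC[B_a]\subseteq L$. Finally, $\CC[B_a]=\CC[A]$, because $A-a\Id$ is the inverse of $B_a$ and inverses lie in the polynomial algebra. Hence $A^k\in L$ for all $k$.

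The main obstacle is exactly the step that uses $L^{-1}$ as a space with the same property. One has to make sure that the elements whose inverses are taken are genuine inverses of invertible elements of $L$, which is what makes their inverses land in $L$. A side issue is choosing enough distinct non-eigenvalues $a_i$ and $b_i$, but this is trivial since $\CC$ is infinite.
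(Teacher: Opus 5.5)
Your proof is correct and is essentially the paper's argument. Like the paper, you apply Lemma~\ref{polynomial-lemma} to the shifts $A-a_i\Id$ to get $\CC[A]\subseteq L^{-1}$, relying, as the paper also does without comment, on the fact that the invertible elements of the linear space $L^{-1}$ are exactly the inverses of the invertible elements of $L$. That fact holds because $L$ meets the invertible matrices in a relatively closed set and inversion is a homeomorphism of the invertible matrices.

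The only difference is how you get back to $L$. The paper observes that inversion permutes the invertible elements of $\CC[A]$, which span $\CC[A]$, so they all lie in $L$. Your second application of Lemma~\ref{polynomial-lemma} to $B_a$ is valid but a detour. It also makes your first step redundant, since $B_a=(A-a\Id)^{-1}\in L^{-1}$ holds directly.
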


\begin{proof}
    Let $R$ be the linear space generated by all non-negative powers of $A$. $R$ is closed under the multiplication of matrices, which means that $R$ is a ring. Moreover $R\cong \CC[x]/(f_A)$ where $f_A$ is the minimal polynomial of matrix $A$.

    The only non-invertible elements in $R$ are zero-divisors and this means that every regular matrix in $R$ has the inverse in $R$. Thus, $R^{-1}=R$.

    The intersection $L\cap R$ is a vector space and clearly $(L\cap R)^{-1} = L^{-1}\cap R^{-1}=L^{-1}\cap R$. Since $L^{-1}$ is a vector space also $L^{-1}\cap R$ is. Consider any $k$ complex numbers $a_1,\dots,a_k$ that are not the roots of $f_A$. We have $(A-a_i\Id)^{-1}\in (L\cap R)^{-1}$. By Lemma \ref{polynomial-lemma}, these matrices generate $R$ as a vector space and therefore $(L\cap R)^{-1}=R$.

    From this, it follows that $L\cap R=R$ and $R\subset L$, which proves the lemma.
\end{proof}

\begin{Lemma}\label{inverse}
 Let $L$ be a linear space of matrices such that $\Id\in L$. Assume that $L^{-1}$ is a linear space. Then $L=L^{-1}$.
\end{Lemma}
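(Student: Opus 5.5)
We interpret $L^{-1}$ as the set of inverses of the regular matrices in $L$. Note that $L$ contains regular matrices, since $\Id\in L$.

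We first show $L^{-1}\subseteq L$. Take a regular $A\in L$ and let $R$ be the linear span of the non-negative powers of $A$. By Lemma~\ref{R<L}, $R\subseteq L$. As observed in the proof of Lemma~\ref{R<L}, $R\cong \CC[x]/(f_A)$ is a finite-dimensional commutative algebra, so a regular element of $R$ is not a zero divisor. Concretely, since $A$ is invertible, $0$ is not a root of $f_A$. Hence $A^{-1}$ is a polynomial in $A$, obtained from $f_A(A)=0$ by solving for the identity, so $A^{-1}\in R\subseteq L$.

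For the reverse inclusion $L\subseteq L^{-1}$, let $B\in L$ be arbitrary. The regular matrices in $L$ form a nonempty Zariski-open, hence dense, subset of $L$, because $\det$ does not vanish at $\Id$. The set $L^{-1}$ contains the inverses of all of them, and $L^{-1}$ is a linear space, hence closed.

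The cleanest route is to apply the first step with the roles of $L$ and $L^{-1}$ swapped. We have $\Id=\Id^{-1}\in L^{-1}$, and $L^{-1}$ is linear by assumption. Its inverse set is $(L^{-1})^{-1}$, the inverses of the regular elements of $L^{-1}$. Every element of $L^{-1}$ is regular, being an inverse, so $(L^{-1})^{-1}=L_{\mathrm{reg}}$, the set of regular matrices in $L$. Its linear span is $L$, since $L_{\mathrm{reg}}$ is dense in $L$ and a linear span is closed.

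This does not immediately satisfy the hypothesis of Lemma~\ref{R<L} that $(L^{-1})^{-1}$ is linear, which is the main obstacle. So instead we argue directly. For $B\in L$, all but finitely many $t\in\CC$ make $B-t\Id$ regular. By the first step, each such $(B-t\Id)^{-1}$ lies in $L$ and also lies in $L^{-1}$ by definition. By Lemma~\ref{polynomial-lemma}, with $k=\deg f_B$ values of $t$ avoiding the roots of $f_B$, the matrices $(B-t\Id)^{-1}$ span the algebra generated by $B$. This span lies in the linear space $L^{-1}$, so $B\in L^{-1}$.

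Combining the two inclusions gives $L=L^{-1}$.
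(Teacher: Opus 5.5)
Your proof is correct. The inclusion $L^{-1}\subseteq L$ is exactly the paper's argument: Lemma~\ref{R<L} puts the span of the powers of a regular $A$ inside $L$, and $A^{-1}$ is a polynomial in $A$.

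The difference is in the reverse direction. The paper never proves $L\subseteq L^{-1}$ directly. It asserts $\dim L=\dim L^{-1}$ because inversion makes the two spaces birational, so $L^{-1}\subseteq L$ must be an equality. You instead prove $L\subseteq L^{-1}$ directly. For $B\in L$ and $\deg f_B$ distinct non-eigenvalues $t_i$, the resolvents $(B-t_i\Id)^{-1}$ lie in $L^{-1}$, and by Lemma~\ref{polynomial-lemma} they span $\CC[B]\ni B$. This is the same resolvent trick the paper uses inside the proof of Lemma~\ref{R<L}, aimed at $L^{-1}$ rather than $L$.

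What your route buys:
\begin{itemize}
\item It is fully elementary, with no appeal to the birational invariance of dimension, which the paper invokes without further justification.
\item It needs only that $L$ is a linear space containing $\Id$ and that $L^{-1}$ is linear; Lemma~\ref{R<L} is not used.
\item It gives the stronger conclusion $\CC[B]\subseteq L^{-1}$ for every $B\in L$.
\end{itemize}
The paper's route is shorter if one takes the birationality remark for granted.

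Two cosmetic remarks. First, the clause saying that $(B-t\Id)^{-1}$ lies in $L$ by the first step is never used. Second, the abandoned detour can be deleted, and its ``obstacle'' is illusory. For the hypothesis to make sense, $L^{-1}$ must be read as the Zariski closure of the set of inverses, since the literal set never contains $0$. Inversion is a homeomorphism of the open set of invertible matrices, and $L_{\mathrm{reg}}$ is closed in it. So the invertible elements of that closure are exactly the inverses of invertible elements of $L$. Hence $(L^{-1})^{-1}=\overline{L_{\mathrm{reg}}}=L$ is linear, and your first step applied to $L^{-1}$ already yields $L\subseteq L^{-1}$. Both of your actual steps go through verbatim under this closure reading.
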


\begin{proof}
Let $A\in L$ be a regular matrix and let $R$ be the vector space generated by all non-negative powers of $A$. We know from Lemma \ref{R<L}, that $R\subset L$. Moreover, $A^{-1}\in R$, hence it follows that $A^{-1}\in L$. Thus, $L^{-1}\subset L$. Clearly, $\dim L=\dim L^{-1}$ since they are birational. Hence, we can conclude that $L^{-1}=L$.
\end{proof}
\begin{Theorem}\label{equiv}
Let $L$ be a linear space of matrices that contains at least one regular matrix. Then the following are equivalent:
\begin{enumerate}
    \item $L^{-1}$ is a linear space,
    \item $AL^{-1}A=L$ for some matrix $A\in L$,
    \item $AL^{-1}A=L$ for all regular matrices $A\in L$,
    \item $AB^{-1}A\in L$ for all regular matrices $A,B\in L$.
\end{enumerate}
\end{Theorem}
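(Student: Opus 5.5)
The plan is to reduce everything to the case $\Id\in L$ by translating with a regular matrix, and then apply Lemma~\ref{inverse}. For a regular $A\in L$, set $M=A^{-1}L$. Then $\Id\in M$, and the regular elements of $M$ are exactly the $A^{-1}B$ with $B\in L$ regular, so
$$M^{-1}=\{B^{-1}A : B\in L \text{ regular}\}=L^{-1}A.$$
Hence $M^{-1}$ is a linear space if and only if $L^{-1}$ is. Also $AM^{-1}=AL^{-1}A$, and $M=M^{-1}$ is equivalent to $L=AM=AM^{-1}=AL^{-1}A$. (Here $L^{-1}$ denotes the set of inverses, or its closure; the identities hold set-theoretically on the regular parts, and I take linear closures if needed.)

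The implications then go as follows.

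(1)$\Rightarrow$(3). Take any regular $A\in L$ and form $M=A^{-1}L$. By the observation above, $M^{-1}=L^{-1}A$ is linear and $\Id\in M$. Lemma~\ref{inverse} gives $M=M^{-1}$, that is, $A^{-1}L=L^{-1}A$. Multiplying on the left by $A$ yields $L=AL^{-1}A$.

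(3)$\Rightarrow$(2). This is immediate, since $L$ contains a regular matrix.

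(2)$\Rightarrow$(1). If $AL^{-1}A=L$ for some $A\in L$, then $A$ must be regular. Otherwise $AL^{-1}A$ consists of singular matrices, while $L$ contains a regular one. So $L^{-1}=A^{-1}LA^{-1}$, which is the image of a linear space under a linear map and hence linear.

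(3)$\Rightarrow$(4). This is also immediate, because $AB^{-1}A\in AL^{-1}A=L$.

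(4)$\Rightarrow$(3). I expect this to be the main step. Fix a regular $A\in L$. Condition (4) gives $AL^{-1}A\subseteq L$. For the reverse inclusion, take a regular $C\in L$ and put $B=AC^{-1}A$. Then $B\in L$ by (4), $B$ is regular, and $B^{-1}=A^{-1}CA^{-1}$, so
$$AB^{-1}A=C.$$
Thus every regular element of $L$ lies in $AL^{-1}A$. Since $L$ is irreducible and its regular elements form a nonempty Zariski-open (hence dense) subset, this gives equality, provided $L^{-1}$ is understood as the Zariski closure of the set of inverses, or one compares dimensions: the map $X\mapsto AX^{-1}A$ is a birational involution-type map on the regular locus. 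The delicate point is making precise in which sense $L^{-1}$ and the equalities in (2),(3) are meant (set of inverses versus its closure). I would state up front that $L^{-1}$ means the closure of the inverses of the regular elements, as implicitly used in Lemma~\ref{inverse}, where $\dim L=\dim L^{-1}$ by birationality. With this convention, (4) gives $AL^{-1}A=L$ by the density argument, which is (3).
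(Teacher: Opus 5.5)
Your proof is correct and follows the paper's route. The key step is the same: reduce to $M=A^{-1}L\ni\Id$ with $M^{-1}=L^{-1}A$ and apply Lemma~\ref{inverse}, except that you use it to get (1)$\Rightarrow$(3) directly, whereas the paper derives (1)$\Rightarrow$(4). Your explicit reverse inclusion for (4)$\Rightarrow$(3) via $B=AC^{-1}A$, your check that the $A$ in (2) must be regular, and your closure convention for $L^{-1}$ supply details that the paper treats as ``clear'' or leaves implicit.
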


\begin{proof}
Clearly $(3)\Rightarrow (2)$ and $(4)\Rightarrow (3)$. Since multiplication by a regular matrix is a bijective linear map, it follows that $(2)\Rightarrow (1)$. We are left with the implication $(1)\Rightarrow (4)$.

Let $A$ be a regular matrix from $L$. Then the space $L':=A^{-1}L$ is a linear space. We have $(L')^{-1}=L^{-1}A$ which is a linear space. Moreover $\Id\in L'$. By Lemma~\ref{inverse}, we have $L'=(L')^{-1}$. Thus,
for any regular matrix $B\in L$ we have $$B^{-1}A=(A^{-1}B)^{-1}\in (L')^{-1}=L'=A^{-1}L.$$

Multiplying by $A$ from the left, we obtain  $AB^{-1}A\in L$, what we wanted to show.


\end{proof}

\begin{Definition}
We call a linear subspace of matrices \emph{self-inverse} if $L^{-1}=L$ and $\Id\in L$.
\end{Definition}

From the proof of Theorem \ref{equiv}, it can be seen that any linear space $L$ that satisfies the condition that the space $L^{-1}$ is linear can be written in the form $AL'$ where $L'$ is a self-inverse space. Thus, from now on, it makes sense to consider only self-inverse spaces.


\begin{Theorem}\label{equiv-cond-SIS}
Let $L$ be a linear subspace of matrices that contains the identity. Then the following are equivalent:
\begin{enumerate}
    \item $L$ is self-inverse,
    \item $A^k\in L$ for all $k\in\NN$, $A\in L$,
    \item $A^2\in L$ for all  $A\in L$,
    \item $AB+BA\in L$ for all $A,B\in L.$
\end{enumerate}
\end{Theorem}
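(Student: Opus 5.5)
We prove the cycle $(1)\Rightarrow(2)\Rightarrow(3)\Rightarrow(4)\Rightarrow(1)$. The implications $(1)\Rightarrow(2)$ and $(2)\Rightarrow(3)$ come essentially for free. If $L$ is self-inverse, then $L^{-1}=L$ is a linear space and $\Id\in L$, so Lemma~\ref{R<L} gives $A^k\in L$ for all $A\in L$ and $k\in\NN$. Taking $k=2$ gives (3).

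For $(3)\Rightarrow(4)$ we polarize. Given $A,B\in L$, the sum $A+B$ lies in $L$, so by (3) the matrices $(A+B)^2$, $A^2$ and $B^2$ all lie in $L$. Hence
\[
AB+BA=(A+B)^2-A^2-B^2\in L .
\]

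The main step is $(4)\Rightarrow(1)$. We first show that (4) implies closure under all powers. Setting $B=A$ in (4) gives $2A^2\in L$, so $A^2\in L$. Then by induction on $k$, using the identity
\[
A\cdot A^{k}+A^{k}\cdot A=2A^{k+1},
\]
we get $A^{k+1}\in L$ whenever $A,A^k\in L$.

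Now let $A\in L$ be regular. By Cayley--Hamilton, or directly via the minimal polynomial $f_A$, whose constant term is nonzero since $A$ is invertible, $A^{-1}$ is a polynomial in $A$. Concretely, if $f_A(x)=\sum_{i=0}^m c_ix^i$ with $c_0\neq 0$, then
\[
A^{-1}=-c_0^{-1}\sum_{i=1}^m c_iA^{i-1}.
\]
The right-hand side is a linear combination of non-negative powers of $A$. These powers lie in $L$ (with $A^0=\Id\in L$ by hypothesis), so $A^{-1}\in L$. Thus $L^{-1}\subseteq L$.

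For the reverse inclusion, observe that inversion is an involution on the regular matrices. Every regular $A\in L$ satisfies $A^{-1}\in L$, so $A=(A^{-1})^{-1}\in L^{-1}$. Hence $L^{-1}$ contains every regular element of $L$, and in particular $\Id$. If one reads $L=L^{-1}$ as an equality of sets of regular elements, this already finishes the argument. If instead $L^{-1}$ is understood as the linear space spanned by, or the closure of, the inverses, we use the fact that the regular matrices form a nonempty Zariski-open subset of $L$ (it contains $\Id$). This subset is dense and spans $L$, which gives $L\subseteq L^{-1}$, as in the dimension argument of Lemma~\ref{inverse}.

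The only delicate point is this last convention about what $L^{-1}$ means. The algebraic content is the polarization identity together with expressing $A^{-1}$ as a polynomial in $A$.
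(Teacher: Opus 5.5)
Your proof is correct and follows the paper's argument essentially step for step: Lemma~\ref{R<L} for $(1)\Rightarrow(2)$, polarization for $(3)\Rightarrow(4)$, the induction via $A\cdot A^k+A^k\cdot A=2A^{k+1}$ for closure under powers, and writing $A^{-1}$ as a polynomial in $A$ to get $L^{-1}\subseteq L$. The only difference is the last step: the paper deduces $L^{-1}=L$ from equal dimension and irreducibility, whereas you note that inversion maps the regular part of $L$ onto itself and that this part is Zariski-dense in $L$, which is slightly more elementary and avoids the dimension/birationality comparison.
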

\begin{proof}
Clearly $(2)\Rightarrow (3)$. To prove $(3)\Rightarrow (4)$, one notices that for $A,B\in L$ we have $(A+B)^2-A^2-B^2=AB+BA\in L$.

For $(4)\Rightarrow (2)$, one can proceed by mathematical induction. For $k=1$ the statement holds. Now assuming that $A^k\in L$ by substituting $B=A^k$ into condition $(4)$, we obtain $2A^{k+1}\in L$, proving the statement.

$(1)\Rightarrow (2)$ holds by Lemma \ref{R<L}. 

We will finish the proof with the implication $(2)\Rightarrow(1)$. 
Assume that $A\in L$ is regular. $A^{-1}$ is a polynomial in $A$, therefore $A^{-1}\in L$. Thus, $L^{-1}\subset L$, which implies that $L^{-1}=L$, since they both have the same dimension and both are irreducible.
\end{proof}
We point out here that, in Theorem~\ref{equiv-cond-SIS}, the equivalence of $(1)$ and $(4)$ may also be consulted in \cite[Lemma 1]{jensen}.

\section{Jordan algebras}\label{jordan}

In this section, we will make connections between self-inverse spaces and Jordan algebras.

\begin{Definition}
 A Jordan algebra over field $k$ is a vector space $J$ over a field $k$ equipped with a bilinear product $\star:J\times J\rightarrow J$ that satisfies

 $$X*Y=Y*X, ((X*X)*Y)*X=(X*X)*(X*Y)$$
 for all $X,Y\in J$.
\end{Definition}

It is a standard result that the set of all $n\times n$ matrices with the operation $X*Y=\frac 12(XY+YX)$ forms a Jordan algebra (see for e.g., \cite{taste, jensen}). In fact, this construction works for any associative algebra.

\begin{Theorem}
Self-inverse spaces are precisely Jordan subalgebras of the Jordan algebra $\CC^{n\times n}$, which contains an identity.
\end{Theorem}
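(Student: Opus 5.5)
The plan is to reduce the statement to the equivalence $(1)\Leftrightarrow(4)$ of Theorem~\ref{equiv-cond-SIS}. First I fix the meaning of the statement. $\CC^{n\times n}$ carries the Jordan product $X*Y=\frac12(XY+YX)$. A \emph{Jordan subalgebra containing an identity} is then a linear subspace $J\subset\CC^{n\times n}$ that is closed under $*$ and contains $\Id$. The claim is that such $J$ are exactly the self-inverse spaces.

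For the direction from self-inverse spaces to Jordan subalgebras, let $L$ be self-inverse. By definition $\Id\in L$, and by Theorem~\ref{equiv-cond-SIS}, $(1)\Rightarrow(4)$, we have $AB+BA\in L$ for all $A,B\in L$. Since $L$ is a linear space, $A*B=\frac12(AB+BA)\in L$. So $L$ is closed under the Jordan product. Its product is the restriction of the product on $\CC^{n\times n}$, so it inherits commutativity and the Jordan identity. Hence $L$ is a Jordan subalgebra, and $\Id$ is its unit, because $\Id*X=X$.

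For the converse, let $J$ be a Jordan subalgebra of $\CC^{n\times n}$ containing $\Id$. Closure under $*$ gives $AB+BA=2(A*B)\in J$ for all $A,B\in J$, which is condition $(4)$. Together with $\Id\in J$, Theorem~\ref{equiv-cond-SIS}, $(4)\Rightarrow(1)$, shows that $J$ is self-inverse.

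The only point needing care is the interpretation of ``contains an identity''. If one allowed a subalgebra with its own unit $e\neq\Id$, for example an idempotent, the statement would fail, since then the subalgebra contains no regular matrices. So I read the condition as $\Id\in J$, matching the definition of self-inverse spaces, and I would state this reading explicitly. With that convention there is no real obstacle: the proof is a direct translation through Theorem~\ref{equiv-cond-SIS}.
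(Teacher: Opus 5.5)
Your proposal is correct and matches the paper's proof, which is just ``Follows from Theorem~\ref{equiv-cond-SIS}''; you spell out the same reduction via $(1)\Leftrightarrow(4)$ together with $\Id\in L$. Your reading of ``contains an identity'' as $\Id\in J$ is correct and worth stating. A unit $e\neq\Id$ of such a subalgebra is a proper idempotent, which forces every element to be singular, so the statement would fail under that reading.
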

\begin{proof}
 Follows from Theorem \ref{equiv-cond-SIS}. 
\end{proof}

Thus, when we speak about self-inverse spaces, we may instead consider them as Jordan algebras. For example, we can ask if two self-inverse spaces are isomorphic as Jordan algebras.

Self-inverse spaces are, therefore, finite-dimensional complex Jordan algebras with a unity element (the identity matrix). For a unitary Jordan algebra $J$, we will denote by $\Id_J$, its unity element.

The following lemma is trivial to check, but it will be important later.

\begin{Lemma}\label{conjugation}
For any regular matrix $P$ the conjugation $A\mapsto PAP^{-1}$ is an automorphism of the Jordan algebra $\CC^{n\times n}$.  
\end{Lemma}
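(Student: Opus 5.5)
The plan is to check directly that the conjugation map $\varphi_P\colon A\mapsto PAP^{-1}$ has the three properties required of a Jordan algebra automorphism of $\CC^{n\times n}$: it is linear, it is bijective, and it preserves the product $X*Y=\frac12(XY+YX)$. The key observation is that $\varphi_P$ is already an automorphism of the associative algebra $\CC^{n\times n}$, and the Jordan product is built from the associative product by a symmetric linear expression. So the Jordan property should follow immediately from the associative one.

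First, linearity. For $A,B\in\CC^{n\times n}$ and $\lambda\in\CC$, distributivity of matrix multiplication gives $P(\lambda A+B)P^{-1}=\lambda PAP^{-1}+PBP^{-1}$.

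Second, bijectivity. The map $A\mapsto P^{-1}AP$ is a two-sided inverse of $\varphi_P$, since $P^{-1}(PAP^{-1})P=A$ and $P(P^{-1}AP)P^{-1}=A$.

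Third, multiplicativity for the associative product. Inserting $P^{-1}P=\Id$ gives
$$\varphi_P(X)\varphi_P(Y)=PXP^{-1}PYP^{-1}=PXYP^{-1}=\varphi_P(XY).$$
Combining this with linearity,
$$\varphi_P(X*Y)=\tfrac12\bigl(\varphi_P(XY)+\varphi_P(YX)\bigr)=\tfrac12\bigl(\varphi_P(X)\varphi_P(Y)+\varphi_P(Y)\varphi_P(X)\bigr)=\varphi_P(X)*\varphi_P(Y).$$
We may also note that $\varphi_P(\Id)=\Id$, so the unity element is preserved.

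There is no genuine obstacle here. The only point that deserves a remark is the consequence used later: since $\varphi_P$ is a Jordan automorphism fixing $\Id$, it maps a self-inverse space $L$ onto $PLP^{-1}$. That image is again a linear space containing $\Id$ and closed under $*$, hence is self-inverse by Theorem~\ref{equiv-cond-SIS}, and it is isomorphic to $L$ as a Jordan algebra.
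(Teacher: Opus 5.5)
Your proof is correct and is the direct check the paper intends: the paper gives no proof and simply calls the lemma trivial to verify. The key point is the one you make, namely that conjugation is an automorphism of the associative algebra and therefore preserves the symmetrized product $X*Y=\frac12(XY+YX)$.
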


The next result is known as Peirce decomposition (\cite[Part II, Chapter 8]{taste}):

\begin{Theorem}\label{FZ}(Peirce Decomposition)
 Let $L$ be a Jordan algebra. Let $I\in L$ have the property that $I*I=I$. Then the two vector subspaces:
\begin{itemize}
    \item $F_I:=\{A\in L: A*I=A\}$
    \item $Z_I:=\{A\in L: A*I=0\}$
\end{itemize}
are both unitary Jordan algebras. Moreover, we have $F_I*Z_I=0$. In addition, the space $F_I\oplus Z_I$ is a Jordan subalgebra of $L$.

Let $H_I:=\{A\in L: A*I=A/2\}$ be a vector subspace. Then $$L=F_I\oplus Z_I\oplus H_I.$$ Moreover, $H_I*H_I\subseteq F_I\oplus Z_I$ and $H_I*(F_I\oplus Z_I)\subseteq H_I$.
\end{Theorem}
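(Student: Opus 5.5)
The plan is to derive everything from the multiplication operators $L_X\colon Y\mapsto X*Y$ and the standard linearized forms of the Jordan identity; the matrix realization is not used. Throughout, $L$ is taken unital with unit $\Id_L$, as for all algebras considered in this paper. Some such hypothesis is needed: in a non-unital algebra $Z_I$ can fail to have a unit, for instance $L=\CC I\oplus N$ with $N*N=0$ and $I*N=0$ gives $Z_I=N$. The unit of $F_I$ will be $I$, and the unit of $Z_I$ will be $\Id_L-I$.

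\textbf{Step 1 (operator identities).} Over a field of characteristic $\neq 2$, linearizing $((X*X)*Y)*X=(X*X)*(X*Y)$ completely in $X$ gives two identities. The first is
\[
[L_A,L_{B*C}]+[L_B,L_{C*A}]+[L_C,L_{A*B}]=0 .
\]
The second expresses $L_{(A*B)*C}$ as
\[
L_{(A*B)*C}=L_{A*B}L_C+L_{B*C}L_A+L_{C*A}L_B-L_AL_CL_B-L_BL_CL_A .
\]
Set $E:=L_I$. Putting $A=B=C=I$ in the second identity and using $I*I=I$ gives
\[
E=3E^2-2E^3, \qquad\text{i.e.}\qquad E(E-\tfrac12)(E-1)=0 .
\]
This polynomial has distinct roots, so $E$ is diagonalizable with eigenvalues in $\{0,\tfrac12,1\}$. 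Its eigenspaces are exactly $Z_I$, $H_I$ and $F_I$, which proves $L=F_I\oplus Z_I\oplus H_I$.

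\textbf{Step 2 (multiplication rules).} Take $A\in L_i$ and $B\in L_j$, where $L_1=F_I$, $L_0=Z_I$ and $L_{1/2}=H_I$. Apply the two operator identities with two of the arguments equal to $I$, and evaluate on $A$ or $B$. This yields two linear relations for $E(A*B)$ and $E^2(A*B)$ in terms of $i$, $j$ and $A*B$.

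First, the commutator identity with $(A,B,C)=(I,I,X)$ gives $2[E,L_{I*X}]+[L_X,E]=0$. For $X\in L_i$ this reads $(2i-1)[E,L_X]=0$. So for $i\in\{0,1\}$ the operator $L_X$ commutes with $E$ and preserves each eigenspace. This gives $F_I*F_I\subseteq F_I$, $Z_I*Z_I\subseteq Z_I$, $H_I*(F_I\oplus Z_I)\subseteq H_I$, and also $F_I*Z_I\subseteq F_I\cap Z_I=0$. In particular $F_I$, $Z_I$ and $F_I\oplus Z_I$ are subalgebras.

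For $A,B\in H_I$, I would use the second identity with $(I,A,B)$ to get an expression for $E(A*B)$. From it one should read off that $A*B$ has no component in the $\tfrac12$-eigenspace, i.e.\ $H_I*H_I\subseteq F_I\oplus Z_I$. This is the computation I expect to be the main obstacle: the bookkeeping of the cubic terms has to cancel exactly the $\tfrac12$-part. As a fallback, I would apply the commutator identity to $(I,A,B)$ and project onto eigenspaces.

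\textbf{Step 3 (units).} By definition of $F_I$, the element $I\in F_I$ acts as the identity on $F_I$, so $I$ is the unit of $F_I$. Since $L_{\Id_L}=\id$, the element $\Id_L-I$ is idempotent, and $L_{\Id_L-I}=\id-E$. Hence $Z_I$ is precisely the $1$-eigenspace of $L_{\Id_L-I}$, that is, $Z_I=F_{\Id_L-I}$. By the previous step applied to the idempotent $\Id_L-I$, this is a subalgebra with unit $\Id_L-I$. This completes the proof.
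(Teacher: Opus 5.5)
Your route is correct and differs from the paper's. However, as written the inclusion $H_I*H_I\subseteq F_I\oplus Z_I$ is only a plan, and your first suggestion for it does not work. Put $I,A,B$ into the three slots of the second identity in that order and evaluate at $I$, which is the natural way to extract $E(A*B)$. The result collapses to the tautology $\tfrac12E(A*B)=\tfrac12E(A*B)$.

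Your fallback closes the step in one line. For $A,B\in H_I$, the commutator identity with $(I,A,B)$ reads $[E,L_{A*B}]+\tfrac12[L_A,L_B]+\tfrac12[L_B,L_A]=0$, i.e.\ $[E,L_{A*B}]=0$. Evaluating at $I$ gives
\[
0=E\bigl((A*B)*I\bigr)-(A*B)*(I*I)=(E^2-E)(A*B).
\]
Now $E$ is diagonalizable with eigenvalues in $\{0,\tfrac12,1\}$, and $\lambda^2-\lambda\neq 0$ only for $\lambda=\tfrac12$. Hence this says exactly $A*B\in F_I\oplus Z_I$.

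Your Step~2 preamble also works as advertised. Take the second identity with $(I,A,I)$, where $A\in L_i$, and evaluate on $B\in L_j$. This gives $(E-E^2)(A*B)=(i-j)^2\,A*B$, which yields $H_I*H_I\subseteq F_I\oplus Z_I$, $H_I*(F_I\oplus Z_I)\subseteq H_I$ and $F_I*Z_I=0$ in one stroke.

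The paper only sketches the case $L\subseteq\CC^{n\times n}$. It conjugates $I$ to $\diag(1,\dots,1,0,\dots,0)$ via Lemma~\ref{conjugation} and reads off $F_I$, $Z_I$, $H_I$ as the two diagonal blocks and the off-diagonal blocks. Every product rule is then just block multiplication.

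That route is shorter and more transparent, and it suffices for self-inverse spaces. But it covers only special Jordan algebras, whereas the statement, and later Theorem~\ref{splitting-by-I} and Corollary~\ref{iso}, are phrased for abstract Jordan algebras. It also leaves implicit that the block components of an element of $L$ again lie in $L$, which amounts to your relation $E(E-\tfrac12)(E-1)=0$ on $L$. Your operator-identity argument proves the statement at the generality in which it is stated.

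Your remark that $L$ must be unital for $Z_I$ to have a unit is a correct amendment to the statement, and your counterexample is valid. The paper's sketch uses unitality tacitly when it names $\Id-I$ as the unit of $Z_I$.
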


\begin{proof}
Even though it is a well-known result, we will sketch the visualization of the proof in the case where $L$ is a subalgebra of $\CC^{n\times n}$.

Since $I^2=I$, we may assume that $I$ is a diagonal matrix with only 1 and 0 on the diagonal (otherwise, due to Lemma \ref{conjugation}, we can get to this case by conjugation). Let us assume that the first $m$ entries are equal to 1 and the rest $n-m$ entries are 0.

The set $F_I$ is formed by all matrices from $L$ that have non-zero entries in the left upper $m\times m$ corner. Clearly, this set is closed under the operation $*$. Moreover, we can see that $\Id_{F_I}=I$. 

Analogously, the set $Z_I$ is formed by all matrices from $L$ that have non-zero entries only in the right bottom $(n-m)\times (n-m)$ corner. This set is also closed under the operation $*$. It follows that $Z_I$ is a Jordan algebra with identity element $\Id-I$.

For the second part, we notice that the space $H_I$ is formed by all matrices from $L$ that have non-zero entries only in the top right $m\times(n-m)$ corner and bottom left $(n-m)\times m$ corner.

It is trivial to check that all the product formulas involving the sets $F-I, Z_I, H_I$ hold there by multiplying matrices in the block forms.

\end{proof}

\begin{Definition}
    We say that a Jordan algebra $L$ is irreducible if it is not isomorphic to a product of two non-trivial self-inverse subspaces. That is $L\cong S\times T$ implies $S=0$ or $T=0$. Otherwise, we call it reducible.
\end{Definition}
\begin{Remark}
Usually, in the literature, it is called a direct sum and not a product of two Jordan algebras. In the case of finite-dimensional Jordan algebras, it is clearly the same. However, sometimes we want to differentiate between the direct sum of underlying vector spaces and the direct sum of Jordan algebras. For this reason, we prefer to speak about the direct sum of vector spaces and about the product of Jordan algebras. 
\end{Remark}

Every Jordan algebra can be written as a product of irreducible Jordan algebras. The next statements will tell us when this is possible.


\begin{Theorem}\label{splitting}
Let $L$ be a Jordan algebra and $S,T$ be subalgebras such that $L=S\oplus T$ as a vector space. Then $\varphi: S\times T\rightarrow L$ given by $\varphi(A,B)=A+B$ is an isomorphism if and only if $A*B=0$ for all $A\in S, B\in T$.
\end{Theorem}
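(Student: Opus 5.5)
The plan is to compare the product structure on $S\times T$ with the product in $L$. In $S\times T$ the Jordan product is defined componentwise: $(A_1,B_1)*(A_2,B_2)=(A_1*A_2,\,B_1*B_2)$. The map $\varphi$ is linear. It is also a bijection of vector spaces, because $L=S\oplus T$ is a direct sum of vector spaces. So $\varphi$ is an isomorphism of Jordan algebras exactly when it is multiplicative, i.e.
$$\varphi\bigl((A_1,B_1)*(A_2,B_2)\bigr)=\varphi(A_1,B_1)*\varphi(A_2,B_2)$$
for all $A_1,A_2\in S$ and $B_1,B_2\in T$. Both directions will come from expanding this identity.

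Expanding the right-hand side by bilinearity and commutativity of $*$ gives
$$(A_1+B_1)*(A_2+B_2)=A_1*A_2+B_1*B_2+A_1*B_2+B_1*A_2 .$$
The left-hand side is $A_1*A_2+B_1*B_2$. Hence multiplicativity of $\varphi$ is equivalent to
$$A_1*B_2+A_2*B_1=0 \quad\text{for all } A_1,A_2\in S,\ B_1,B_2\in T .$$

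For the direction ``if'': assume $A*B=0$ for all $A\in S$, $B\in T$. Then both cross terms vanish, so $\varphi$ is multiplicative and therefore an isomorphism.

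For the direction ``only if'': assume $\varphi$ is an isomorphism. Given arbitrary $A\in S$ and $B\in T$, specialize the displayed identity to $A_1=A$, $B_2=B$, $A_2=0$, $B_1=0$. This yields $A*B=0$.

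The only point needing care is that the Jordan structure on $S\times T$ is the componentwise one. Also, ``isomorphism'' here means specifically that this particular map $\varphi$ is a Jordan isomorphism, not merely that some abstract isomorphism $S\times T\cong L$ exists. With that reading there is no real obstacle; the argument is a direct bilinear expansion.
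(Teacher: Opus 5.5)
Your proposal is correct and follows essentially the same route as the paper. The paper also expands $(A_1+B_1)*(A_2+B_2)$ bilinearly and drops the cross terms for ($\Leftarrow$); for ($\Rightarrow$) it uses $\varphi\bigl((0,B)*(A,0)\bigr)=0$ to get $A*B=0$, which is exactly your specialization $A_2=B_1=0$.
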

\begin{proof}
 $(\Leftarrow)$: We have $$\varphi((A_1,B_1))*\varphi((A_2,B_2))=(A_1+B_1)*(A_2+B_2)=A_1*A_2+A_1*B_2+B_1*A_2+B_1*B_2=$$
 $$A_1*A_2+B_1*B_2=\varphi(A_1*A_2,B_1*B_2)=\varphi((A_1,B_1)*(A_2,B_2)).$$


 $(\Rightarrow)$: For any $A\in S, B\in T$ we have $0=(0,B)*(A,0)=(0+B)*(A+0)=A*B$ which implies that $A*B=0$.
\end{proof}

\begin{Theorem}\label{splitting-by-I}
Let $L$ be a Jordan algebra and let $I\in L$ be an element with the property $I*I=I$.
Then $\varphi: F_I\times Z_I\rightarrow L$ given by $\varphi(A,B)=A+B$ is an isomorphism if and only if $F_I\oplus Z_I=L$.
\end{Theorem}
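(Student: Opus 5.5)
We combine Theorem~\ref{splitting} with the Peirce decomposition (Theorem~\ref{FZ}). The map $\varphi$ is always linear, and by Theorem~\ref{FZ} both $F_I$ and $Z_I$ are Jordan subalgebras of $L$ with $F_I*Z_I=0$. So the only question is whether $\varphi$ is bijective.

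For $(\Rightarrow)$: if $\varphi$ is an isomorphism, it is in particular surjective. Its image is exactly $F_I+Z_I$, so $L=F_I+Z_I$. The sum is direct because $F_I\cap Z_I=0$: if $A*I=A$ and $A*I=0$, then $A=0$. Hence $F_I\oplus Z_I=L$.

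For $(\Leftarrow)$: assume $L=F_I\oplus Z_I$ as vector spaces. Then $F_I$ and $Z_I$ are subalgebras whose vector space direct sum is $L$, which is precisely the setting of Theorem~\ref{splitting} with $S=F_I$ and $T=Z_I$. The Peirce decomposition gives $A*B=0$ for all $A\in F_I$ and $B\in Z_I$. Theorem~\ref{splitting} then yields that $\varphi$ is an isomorphism of Jordan algebras.

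The only point needing care is the orthogonality $F_I*Z_I=0$, which we take from Theorem~\ref{FZ}. In the matrix picture it is immediate. After conjugating by Lemma~\ref{conjugation} so that $I=\diag(1,\dots,1,0,\dots,0)$, elements of $F_I$ are supported in the upper-left block and elements of $Z_I$ in the lower-right block, so $AB=BA=0$. In an abstract Jordan algebra one would instead cite the standard Peirce relation $L_1(I)*L_0(I)=0$. We also note the equivalent formulation that the condition $F_I\oplus Z_I=L$ is the same as $H_I=0$, by the decomposition $L=F_I\oplus Z_I\oplus H_I$; this is not needed for the proof but clarifies the statement.
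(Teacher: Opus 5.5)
Your proof is correct and is essentially the paper's argument. For ``isomorphism $\Rightarrow$ $L=F_I\oplus Z_I$'' you use $F_I\cap Z_I=0$ together with surjectivity of $\varphi$ (the paper uses the equivalent dimension count), and for the converse you combine $F_I*Z_I=0$ from Theorem~\ref{FZ} with Theorem~\ref{splitting}, exactly as the paper does (incidentally, your direction labels are the right ones, whereas the paper's proof has $(\Rightarrow)$ and $(\Leftarrow)$ interchanged).
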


\begin{proof}
 $(\Leftarrow)$: Clearly $F_I\cap Z_I=0$. Thus, if the spaces $L$ and $F_I\times Z_I$ have the same dimension, we must have $L=F_I\oplus Z_I$.

 $(\Rightarrow)$: By Theorem \ref{FZ}, we have $A*B=0$ for any $A\in F_I, B\in Z_I$. Thus, by Theorem \ref{splitting}, the map $\varphi$ is an isomorphism.
\end{proof}

\begin{Theorem}\label{the-other-part}
Let $L$ be a unitary Jordan algebra. Suppose that $L\cong S\times T$ for two Jordan subalgebras $S$ and $T$. Then there exists an element $I\in L$, $I\notin\{\Id_L,0\}$ such that $I*I=I$ and $F_I\cong S, Z_I\cong T$. 
\end{Theorem}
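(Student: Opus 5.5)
Let $\varphi\colon S\times T\to L$ be the given isomorphism of Jordan algebras. The statement implicitly requires both factors to be non-trivial; otherwise $I$ cannot avoid $\{\Id_L,0\}$. I will therefore assume $S\neq 0\neq T$. The natural candidate for the idempotent is the image of the unit of the first factor. Since $L$ is unitary, the product $S\times T\cong L$ is unitary, and its unit $(e,f)$ must satisfy $(e,f)*(A,B)=(A,B)$ for all $(A,B)$, that is, $e*A=A$ and $f*B=B$ componentwise. Hence $S$ and $T$ are themselves unitary with units $\Id_S=e$ and $\Id_T=f$, and $\varphi(\Id_S,\Id_T)=\Id_L$. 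I will set
\[
I:=\varphi(\Id_S,0).
\]
Then
\[
I*I=\varphi\bigl((\Id_S,0)*(\Id_S,0)\bigr)=\varphi(\Id_S,0)=I.
\]
Moreover, $I\neq 0$ because $\Id_S\neq 0$ (as $S\neq0$), and $I\neq \Id_L$ because $(\Id_S,0)\neq(\Id_S,\Id_T)$ (as $T\neq 0$, so $\Id_T\neq 0$), and $\varphi$ is injective.

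Next I will compute $F_I$ and $Z_I$ by transporting everything through $\varphi$. An element $X=\varphi(A,B)$ satisfies
\[
X*I=\varphi\bigl((A,B)*(\Id_S,0)\bigr)=\varphi(A*\Id_S,\,B*0)=\varphi(A,0).
\]
So $X*I=X$ holds iff $\varphi(A,0)=\varphi(A,B)$, iff $B=0$. Likewise, $X*I=0$ holds iff $A=0$. Consequently
\[
F_I=\varphi(S\times 0),\qquad Z_I=\varphi(0\times T).
\]
Restricting $\varphi$ to $S\times 0\cong S$ and $0\times T\cong T$ gives Jordan algebra isomorphisms onto these subalgebras, so $F_I\cong S$ and $Z_I\cong T$. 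As a consistency check, $F_I\oplus Z_I=L$, in agreement with Theorem~\ref{splitting-by-I}.

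The only step requiring any care is the first: checking that the units of the factors exist and that $\Id_L$ corresponds to $(\Id_S,\Id_T)$. This is just the componentwise argument above. If the hypothesis ``$S,T$ subalgebras of $L$'' is meant literally, with $\varphi(A,B)=A+B$, the same argument works verbatim, with $I=\Id_S$ viewed inside $L$.
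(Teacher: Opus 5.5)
Your proof is correct and follows the same route as the paper: set $I=\varphi(\Id_S,0)$, check $I*I=I$, and identify $F_I=\varphi(S\times 0)$ and $Z_I=\varphi(0\times T)$. You also spell out points the paper leaves as ``clear'': that $S$ and $T$ are unitary, that $\varphi(\Id_S,\Id_T)=\Id_L$, and that $S,T\neq 0$ is needed for $I\notin\{\Id_L,0\}$.
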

\begin{proof}
Clearly, $S$ and $T$ must also be unitary. Consider an element $\overline{I}:=(\Id_S,0)\in S\times T$. It is easy to check that  $\overline I*\overline I=\overline I$ and $F_{\overline I}=S\times\{0\}$ and $Z_{\overline I}=\{0\} \times T$.

If $\varphi: S\times T\rightarrow L$ is an isomorphism, then clearly $I:=\varphi(\overline I)$ is the element that satisfies the condition.
\end{proof}

The following result is a consequence of Theorem~\ref{splitting-by-I} and Theorem~\ref{the-other-part}.

\begin{Corollary}\label{iso}
 Let $L$ be a Jordan algebra. Then $L$ is isomorphic to a product of two non-trivial Jordan subalgebras if and only if there exists an element $I\in L$ with the property $I*I=I$ and $H_I=0$.   
\end{Corollary}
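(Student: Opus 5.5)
The corollary follows by combining Theorem~\ref{splitting-by-I} and Theorem~\ref{the-other-part} with the Peirce decomposition of Theorem~\ref{FZ}. Throughout, $L$ is a unitary Jordan algebra, which is the setting of self-inverse spaces. Two facts are used repeatedly. First, by Theorem~\ref{FZ} we have $L=F_I\oplus Z_I\oplus H_I$ for any idempotent $I$. Hence the condition $F_I\oplus Z_I=L$ in Theorem~\ref{splitting-by-I} is equivalent to $H_I=0$. Second, we need to know when $F_I$ and $Z_I$ are nontrivial. We have $I\in F_I$ because $I*I=I$. We have $\Id_L-I\in Z_I$ because $(\Id_L-I)*I=I-I=0$.

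For the direction ($\Leftarrow$), take an idempotent $I$ with $H_I=0$. Then $F_I\oplus Z_I=L$, so Theorem~\ref{splitting-by-I} gives an isomorphism $F_I\times Z_I\to L$, $(A,B)\mapsto A+B$. Here $F_I$ and $Z_I$ are Jordan subalgebras of $L$ by Theorem~\ref{FZ}. To make both factors non-trivial, the statement must be read with $I\notin\{0,\Id_L\}$, matching the conclusion of Theorem~\ref{the-other-part}. The trivial choices must be excluded because $I=\Id_L$ gives $F_I=L$, $Z_I=0$ and $H_I=0$, so the condition would hold vacuously. With $I\neq 0$ we get $F_I\ni I\neq 0$, and with $I\neq\Id_L$ we get $Z_I\ni \Id_L-I\neq 0$. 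So the decomposition is into two non-trivial subalgebras.

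For the direction ($\Rightarrow$), suppose $L\cong S\times T$ with $S,T\neq 0$. Theorem~\ref{the-other-part} provides an idempotent $I\notin\{0,\Id_L\}$ with $F_I\cong S$ and $Z_I\cong T$. It remains to show $H_I=0$. This is a dimension count: $\dim F_I+\dim Z_I=\dim S+\dim T=\dim L$. Since $F_I\cap Z_I=0$ (an element $A$ in both satisfies $A=A*I=0$), the sum $F_I\oplus Z_I$ is direct and fills $L$. By the direct sum in Theorem~\ref{FZ}, this forces $H_I=0$.

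The only delicate point is the bookkeeping of non-triviality, namely making sure the idempotent in the statement is understood as different from $0$ and $\Id_L$. The rest is a direct assembly of the cited results.
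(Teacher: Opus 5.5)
Your proof is correct and takes the paper's route: the paper only says that the corollary follows from Theorem~\ref{splitting-by-I} and Theorem~\ref{the-other-part}, and you fill in the bookkeeping. That bookkeeping is the Peirce decomposition of Theorem~\ref{FZ}, which turns $F_I\oplus Z_I=L$ into $H_I=0$, together with a dimension count that is valid in the paper's finite-dimensional unitary setting. You are also right that the idempotent must satisfy $I\notin\{0,\Id_L\}$, since $0$ and $\Id_L$ both give $H_I=0$ trivially, and this is the reading intended by Theorem~\ref{the-other-part}.
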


The following result, we show that there is a unique way to write a Jordan algebra as a product of its irreducible Jordan subalgebras. 

\begin{Theorem}[Uniqueness of the product]\label{uniqueness}
Let $L$ be a unitary Jordan subalgebra. If $L\cong S_1\times\dots \times S_k\cong T_1\times \dots\times T_l$, where all $S_i, T_i$ are irreducible and non-trivial, then $k=l$. Moreover, let $\varphi_S: S_1\times\dots\times S_k\rightarrow L$ and  $\varphi_T: T_1\times\dots\times T_k\rightarrow L$ be two isomorphisms and let $L_i:=\varphi(0\times 0\times\dots \times S_i\times\dots\times 0)$
and $L'_i:=\varphi(0\times 0\times\dots \times T_i\times\dots\times 0)$. Then $\{L_1,\dots,L_k\}=\{L'_1, \dots , L'_k\}$. In particular, we have that the set of factors $\{S_1,\dots ,S_k\}$ is, up to an isomorphism and ordering, unique.  
\end{Theorem}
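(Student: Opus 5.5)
The plan is to reduce the decomposition question to idempotents of $L$ that are central in the Jordan sense, and then argue as one does for central idempotents in a commutative ring. By Theorem~\ref{splitting}, a decomposition $L\cong S_1\times\dots\times S_k$ with images $L_i$ gives $L=L_1\oplus\dots\oplus L_k$ as vector spaces, with $L_i*L_j=0$ for $i\neq j$. Put $E_i:=\varphi_S(0,\dots,\Id_{S_i},\dots,0)$. Then $E_i*E_i=E_i$, $E_i*E_j=0$ for $i\ne j$, and $\sum_i E_i=\Id_L$. Moreover $L_i=F_{E_i}$ and $\bigoplus_{j\neq i}L_j=Z_{E_i}$, so $H_{E_i}=0$. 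I will call an idempotent $E$ with $H_E=0$ \emph{central}. By Theorem~\ref{splitting-by-I}, such an $E$ gives $L\cong F_E\times Z_E$. Doing the same for the $T$-decomposition produces central idempotents $E'_1,\dots,E'_l$ with $L'_j=F_{E'_j}$.

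The key step is to show that central idempotents behave like central idempotents of an associative commutative algebra. Concretely, I want: if $E,E'$ are central idempotents, then $E*E'$ is again a central idempotent, and $F_{E*E'}=F_E\cap F_{E'}$. Since $H_{E}=0$, every $A\in L$ decomposes as $A=A_F+A_Z$ with $A_F\in F_E$, $A_Z\in Z_E$. This says exactly that multiplication by $E$, the map $A\mapsto E*A$, is the projection onto $F_E$ along $Z_E$, and that this projection is a Jordan algebra homomorphism. Applying this to $E'$, we get $E'=E'_F+E'_Z$ with $E'_F=E*E'$. Because $F_E*Z_E=0$, the product splits componentwise, so $E'_F$ and $E'_Z$ are idempotents.

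Next I check that $L_{E'}$ (multiplication by $E'$) preserves $F_E$ and $Z_E$. Take $A\in F_E$ and write $E'*A=B_F+B_Z$. Then $E'*A=E'_F*A$, since $E'_Z*A=0$, and $E'_F*A\in F_E$ because $F_E$ is a subalgebra. So $B_Z=0$. Hence the two projections commute, and $E*E'=E'_F$ induces the projection onto $F_E\cap F_{E'}$ along the sum of the other intersections. This shows that $E*E'$ is a central idempotent.

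Now irreducibility enters. Fix $i$. The element $E_i$ is the identity of $L_i\cong S_i$, and the $E_i*E'_j$ for $j=1,\dots,l$ are mutually orthogonal central idempotents of $L_i$ summing to $\Id_{L_i}$. Any nonzero central idempotent $G\neq\Id_{L_i}$ of $L_i$ would split $L_i$ into a nontrivial product by Corollary~\ref{iso}, contradicting irreducibility of $S_i$. Therefore exactly one $j$ has $E_i*E'_j=E_i$, and the others are zero. This means $L_i\subseteq L'_j$. By symmetry $L'_j\subseteq L_{i'}$ for some $i'$, and because the $L$'s intersect trivially, $i'=i$. So $L_i=L'_j$, the assignment $i\mapsto j$ is a bijection, $k=l$, and the sets $\{L_i\}$ and $\{L'_j\}$ coincide. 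The isomorphism classes of the factors then follow from $S_i\cong L_i$.

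The main obstacle I expect is the commuting-projection step. The Jordan product is not associative, so I must avoid manipulations like $(E*E')*A=E*(E'*A)$ unless they are justified. I would justify them only through the block decomposition $L=F_E\oplus Z_E$ together with $F_E*Z_E=0$, exactly as sketched above. If that becomes awkward, the fallback is to use the concrete matrix realization, as in the proof of Theorem~\ref{FZ}: conjugate so that $E$ is diagonal, and read off the projections from the block structure.
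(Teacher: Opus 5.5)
Your proof is correct. Its core object is the same as the paper's: $E_i*E'_j$ is exactly the $L_i$-component of $E'_j$ in $L=L_1\oplus\dots\oplus L_k$, i.e.\ the paper's $J_i$ for $J=I'_j$.

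\textbf{How the routes differ.} The paper fixes $j$ and claims that $I'_j$ is a sum of some of the $I_i$, so that $F_{I'_j}$ is the corresponding sum of the $L_i$. It then uses irreducibility of $T_j$ to force a single summand. You instead fix $i$, use orthogonality and $\sum_j E'_j=\Id_L$ to single out one $j$, and finish by containment and symmetry.

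\textbf{What your extra lemma buys.} You prove that the components are \emph{central} idempotents, which is exactly the hypothesis under which Corollary~\ref{iso} says anything. The paper applies irreducibility of $L_i$ to an arbitrary idempotent component $J_i$ and concludes $J_i\in\{0,I_i\}$. For general idempotents this is false: take $k=1$, $L=\CC^{2\times 2}$ (which is irreducible) and $J=\diag(1,0)$. In the case actually used, $J=I'_j$, the conclusion holds only after checking $H_{J_i}=0$ inside $L_i$, and the paper omits that check. Your commuting-projection lemma supplies precisely this missing centrality, so your route closes a gap that the paper's leaves open.

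\textbf{Two points to write out.}

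First, the step you flagged. Put $M_X\colon A\mapsto X*A$. Since $E*E'\in F_E$ annihilates $Z_E$ and $M_E$ is a homomorphism,
\[
(E*E')*A=(E*E')*M_E(A)=M_E(E')*M_E(A)=M_E(E'*A).
\]
Hence $M_{E*E'}=M_EM_{E'}$ is a product of commuting idempotent operators. This gives $H_{E*E'}=0$ and $F_{E*E'}=F_E\cap F_{E'}$ at once.

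Second, say explicitly why $E_i*E'_j$ is central in $L_i$ and not merely in $L$: its multiplication operator is idempotent and preserves $L_i$.

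You could also bypass the general lemma. On $L_i$, multiplication by $E_i*E'_j$ agrees with multiplication by $E'_j$, which is idempotent because $E'_j$ is central. Moreover, if $E_i*E'_j=E_i$, then $E'_j*A=A$ for all $A\in L_i$, which gives $L_i\subseteq F_{E'_j}=L'_j$ directly.
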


\begin{proof}



Consider the elements $I_i=\varphi_S(0,0,\dots,0,\Id_{S_i},0,\dots,0)$. Note that $F_{I_i}=L_i$.

Let $J\in L$ be an element with the property $J*J=J$. Let us decompose $J=J_1+\dots +J_k$ where $J_i\in L_i$. Then $J*J=J_1*J_1+\dots+J_k*J_k$. It follows that $J_i*J_i=J_i$. However, since the space $L_i$ is irreducible, by Corollary \ref{iso}, we must have $J_i=I_i$ or $J_i=0$. Thus, it follows that $J=I_{i_1}+I_{i_2}+\dots+I_{i_m}$. Then $F_J=L_{i_1}\oplus \dots\oplus L_{i_m}\cong S_{i_1}\times\dots\times S_{i_m}$. Thus, if $m\ge 2$, the space $F_J$ is reducible. 

Let $I'_i=\varphi_T(0,0,\dots,0,\Id_{T_i},0,\dots,0)$. Since $I'_i*I'_i=I'_i$ and $F_{I'_i}\cong T_i$ is irreducible, it follows that $I'_i=I_{j}$ for some index $j$. Clearly, all the elements $I'_i$ are different, thus we have a bijection between sets $\{I_1,\dots , I_k \}$ and $\{I'_1,\dots , I'_l\}$ which proves the theorem. 
\end{proof}

\section{Type of minimal polynomials}\label{sectype}

The concept of a minimal polynomial is natural for matrices. We can extend this concept into our self-inverse spaces and Jordan algebras.

We will start with a trivial result.
\begin{Lemma}
    The intersection of two self-inverse spaces is a self-inverse space.
\end{Lemma}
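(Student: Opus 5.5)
The plan is to use the characterization in Theorem~\ref{equiv-cond-SIS}, which turns self-inverseness into closure conditions, and closure conditions pass to intersections. Let $L_1$ and $L_2$ be self-inverse subspaces of $\CC^{n\times n}$ and set $L:=L_1\cap L_2$.

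First, $L$ is a linear subspace, being an intersection of two linear subspaces. It also contains $\Id$, since $\Id\in L_1$ and $\Id\in L_2$ by the definition of a self-inverse space. So Theorem~\ref{equiv-cond-SIS} applies to $L$. By the same theorem, applied to $L_1$ and to $L_2$, condition $(3)$ holds for each: $A^2\in L_1$ for all $A\in L_1$, and $A^2\in L_2$ for all $A\in L_2$.

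Now take any $A\in L$. Then $A\in L_1$ gives $A^2\in L_1$, and $A\in L_2$ gives $A^2\in L_2$, so $A^2\in L$. Thus $L$ satisfies condition $(3)$ of Theorem~\ref{equiv-cond-SIS}, and the implication $(3)\Rightarrow(1)$ yields that $L$ is self-inverse.

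One could instead argue directly from $L^{-1}=L_1^{-1}\cap L_2^{-1}=L_1\cap L_2$. However, the inclusion $L^{-1}\supseteq L$ then needs a dimension or irreducibility argument, which the cited theorem already handles. I therefore expect no real obstacle; the only point to check is that $\Id\in L$, which is needed to invoke Theorem~\ref{equiv-cond-SIS}.
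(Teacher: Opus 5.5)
Your proof is correct and follows the paper's own argument, which simply invokes condition $(3)$ of Theorem~\ref{equiv-cond-SIS}. You have spelled out the details the paper leaves implicit: that $\Id\in L_1\cap L_2$, so the theorem applies, and that $A^2$ lies in both $L_1$ and $L_2$.
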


\begin{proof}
Using condition $(3)$ in Theorem \ref{equiv-cond-SIS} it is obvious.
\end{proof}

Therefore, for any subset $S$ of $n\times n$ matrices, we may define the smallest self-inverse space (Jordan subalgebra containing the identity) that contains $S$ and we denote it by $[S]$. Clearly, $[S]$ is contained in the subring generated by $S$, but it is not necessarily equal to it. For example, if we take $S$ to be the set of all symmetric matrices, then $[S]=S$. However, the set of symmetric matrices does not form a subring.

In the case $S=\{A\}$, we have $[A]=\{f(A): f\in \CC[x]\}$.

\begin{Theorem}\label{generated-by-single-matrix}
Let $A$ and $B$ be two matrices. Then $[A]$ and $[B]$ are isomorphic if and only if the minimal polynomials of $A$ and $B$ have the same number of roots with the same multiplicities.
\end{Theorem}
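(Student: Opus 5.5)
We identify $[A]$, as an abstract Jordan algebra, with the commutative associative algebra $\CC[x]/(f_A)$, where $f_A$ is the minimal polynomial of $A$. Then we use the Chinese remainder theorem together with an invariant that can be read off from the Jordan structure alone.

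First, $[A]=\{g(A): g\in\CC[x]\}$, and the map $\CC[x]/(f_A)\to[A]$, $g\mapsto g(A)$, is an isomorphism of associative algebras. Since $[A]$ is commutative, the Jordan product $X*Y=\frac12(XY+YX)$ coincides with the ordinary product on it. Hence two algebras $[A]$ and $[B]$ are isomorphic as Jordan algebras if and only if $\CC[x]/(f_A)$ and $\CC[x]/(f_B)$ are isomorphic as commutative algebras. This holds because a Jordan isomorphism between commutative associative algebras is a ring isomorphism.

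Write $f_A=\prod_{i=1}^r (x-\lambda_i)^{m_i}$ with distinct $\lambda_i$. By the Chinese remainder theorem,
$$\CC[x]/(f_A)\cong \prod_{i=1}^r \CC[x]/((x-\lambda_i)^{m_i})\cong \prod_{i=1}^r \CC[t]/(t^{m_i}),$$
where the last step uses the substitution $t=x-\lambda_i$. This gives the direction ($\Leftarrow$): if the multisets $\{m_i\}$ agree, the two products agree factor by factor.

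For ($\Rightarrow$) we need to recover the multiset $\{m_i\}$ from the algebra. Each factor $\CC[t]/(t^{m})$ is local, so its only idempotents are $0$ and $1$. By Corollary~\ref{iso} (with $F_I$ and $Z_I$ as in Theorem~\ref{FZ}), each factor is therefore irreducible. Theorem~\ref{uniqueness} then shows that the factors are determined up to isomorphism and ordering. Finally, $\CC[t]/(t^m)$ has dimension $m$, so the multiset $\{m_i\}$ is recovered from the dimensions of the factors.

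A more elementary alternative for ($\Rightarrow$) is to count idempotents: there are $2^r$ of them, which determines $r$. The $m_i$ are then the dimensions of the algebras $F_{e}$ over the minimal nonzero idempotents $e$.

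The step that needs the most care is the irreducibility of $\CC[t]/(t^m)$ as a Jordan algebra. By Corollary~\ref{iso}, a splitting would require an idempotent $I\notin\{0,1\}$. Since $I*I=I^2$ in this commutative algebra, such an $I$ would satisfy $I^2=I$. In a local ring any idempotent is either a unit, hence equal to $1$, or nilpotent, hence equal to $0$. This rules out $I$, so no splitting exists and Theorem~\ref{uniqueness} applies.
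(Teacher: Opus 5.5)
Your proof is correct, and it takes a genuinely different route from the paper's. The paper works with explicit matrices. For ($\Leftarrow$) it builds the block-Toeplitz model $\mathcal L_{k_1,\dots,k_m}$ and takes a Jordan-normal-form matrix $X$ with the same minimal polynomial as $A$. It then gets $[A]\cong[X]=\mathcal L_{k_1,\dots,k_m}$ from $f(A)\mapsto f(X)$ and a dimension count. Your CRT step is the abstract version of this, since $\mathcal L_k$ is a matrix model of $\CC[t]/(t^k)$. For ($\Rightarrow$) the paper uses an invariant of single elements rather than idempotents. Every element of $\mathcal L_{k_1,\dots,k_m}$ has characteristic polynomial $\prod_i(x-x_{i,1})^{k_i}$, so any element whose minimal polynomial has the maximal degree $\sum k_i$ is of type exactly $(k_1,\dots,k_m)$, and Jordan isomorphisms preserve minimal polynomials. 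That invariant makes sense in every unital Jordan algebra, and it becomes the ``generic type'' that organizes Sections~4--5. Your argument instead gives a sharper structural picture of $[A]$: it is commutative associative, so Jordan isomorphisms are ring isomorphisms, and the multiplicities are the dimensions of its local factors. Of your two versions of ($\Rightarrow$), the idempotent count ($2^r$ idempotents, and $\dim F_e$ for the minimal nonzero $e$) is fully self-contained and is the safer one. The other version relies on Theorem~\ref{uniqueness}, and the paper's proof of that theorem asserts that an irreducible factor has no idempotents other than $0$ and its unit. Irreducibility does not give this in general; for example, $\diag(1,0)\in\CC^{2\times2}$ is such an idempotent. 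It does hold for the local factors $\CC[t]/(t^m)$, as you verify yourself, so your application of that theorem is sound.
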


\begin{proof}
In the case that $A$ and $B$ have the same minimal polynomial, clearly the map $\varphi:[A]\rightarrow [B]$ such that $f(A)\mapsto f(B)$ for any polynomial $F$ is an isomorphism.

For the sequence $(k_1,\dots,k_m)$ we define the following self-inverse space, which will be denoted by $\mathcal L_{k_1,\dots,k_m}$:

\noindent$\left\{\begin{bmatrix}
    \begin{bmatrix}
        x_{1,1}& x_{1,2}&x_{1,3}&\dots&x_{1,k_1}\\
        0&x_{1,1}& x_{1,2}&\dots&x_{1,k_1-1}\\
        0&0&x_{1,1}&\dots&x_{1,k_1-2}\\
        \vdots&\vdots&\vdots&\ddots&\vdots\\
        0&0&0&\dots&x_{1,1}
    \end{bmatrix} &  \mathbf{0}& \mathbf{0}\\
     \mathbf{0}&\ddots& \mathbf{0}\\
     \mathbf{0}& \mathbf{0}& \begin{bmatrix}
        x_{m,1}& x_{m,2}&x_{m,3}&\dots&x_{m,k_m}\\
        0&x_{m,1}& x_{m,2}&\dots&x_{m,k_m-1}\\
        0&0&x_{m,1}&\dots&x_{m,k_m-2}\\
        \vdots&\vdots&\vdots&\ddots&\vdots\\
        0&0&0&\dots&x_{m,1}
    \end{bmatrix}
\end{bmatrix}: x_{i,j}\in\CC\right\}.$

It is easy to check that this space is self-inverse space. It has dimension $k_1+\dots+k_m$. Assume that the minimal polynomial $m(A)$ of $A$ has $m$ roots $y_1,\dots,y_m$ with multiplicities $k_1,\dots,k_m$. We will show that $[A]\cong\mathcal L_{k_1,\dots,k_m}$.
Consider the matrix $X$ which satisfies $x_{i,1}=y_1$, $x_{i,2}=1$ and the rest of its entries is 0. Clearly, the minimal polynomial of $X$ is $m(A)$ since it is the matrix in Jordan normal form. Thus $[X]$ is of dimension $k_1+\dots+k_m$ which implies that $[X]=\mathcal L_{k_1,\dots,k_m}$. Thus $[A]\cong[X]=\mathcal L_{k_1,\dots,k_m}$. Thus, we showed the first implication, that if $A$ and $B$ have the roots with the same multiplicities, their self-inverse spaces are isomorphic.

For the other implication it suffices to show that $\mathcal L_{k_1,\dots,k_m}\cong\mathcal L_{k'_1,\dots,k'_{m'}}$ implies $\{k_1,\dots,k_m\}=\{k'_1,\dots,k'_{m'}\}$, where the last equality is the equality of multisets.

Clearly, we have that $k_1+\dots+k_m=k'_1+\dots+k'_{m'}$, based on the dimension argument.

Note that the characteristic polynomial of any matrix in $\mathcal L_{k_1,\dots,k_m}$ is $(x-x_{1,1})^{k_1}\dots(x-x_{m,1})^{k_m}$. Thus, any matrix from this space that has a minimal polynomial of degree $k_1+\dots+k_m$ must have a minimal polynomial of this form. Moreover, $x_{i,1}$ elements must be all different, otherwise the minimal polynomial would be of smaller degree.

However, this shows that if $\mathcal L_{k_1,\dots,k_m}\cong\mathcal L_{k'_1,\dots,k'_{m'}}$, then the matrix with minimal polynomial with roots of multiplicities $k_1,\dots,k_m$ must be mapped to a matrix with minimal polynomial with roots of multiplicities $k'_1,\dots,k'_{m'}$. This proves that $\{k_1,\dots,k_m\}=\{k'_1,\dots,k'_{m'}\}$.
\end{proof}

\begin{Remark}
Note that $\mathcal L_{k_1,\dots,k_m}\cong \mathcal L_{k_1}\times\dots\times \mathcal L_{k_m}$.
\end{Remark}


We have seen that the minimal polynomial of a matrix plays an important role in understanding self-inverse spaces of matrices. More specifically, the essential data captured by a minimal polynomial is found in its degree and multiplicity of roots. Thus, for $k_1\ge\dots\ge k_m$, it makes sense to define the following:

$$\mathcal M_{k_1,\dots,k_m}^n=\overline{\{A\in \CC^{n\times n}: \text{ minimal polynomial of $A$ has $m$ different roots}}$$
$$\overline{\text{with multiplicities $k_1,\dots,k_m$}\}}.$$

It is easy to see that $\mathcal M_{k_1,\dots,k_m}^n$ is an algebraic variety since all conditions are of polynomial type.
Sometimes (mainly when we speak abstractly about self-inverse spaces), we will omit the upper index $n$. We assume that this index is taken appropriately, i.e., it is the same as the size of matrices in our self-inverse space.

We say that a matrix $A$ is {\em of type $(k_1,\dots,k_m)$} if its minimal polynomial has $m$ different roots of multiplicities $k_1,\dots,k_m$. Hence, a generic matrix from $\mathcal M_{k_1,\dots,k_m}$ is of type $(k_1,\dots,k_m)$.

We can extend this concept even into finite-dimensional unitary Jordan algebras. For an element $X\in J$, is it polynomial well-defined, so also every element has its minimal polynomial. By $J_{k_1,\dots,k_m}$ we denote the closure of the set of all elements in $J$ with the minimal polynomial of corresponding type. Again, it is an algebraic subvariety of the vector space $J$.

Since $J$ is a union of all such sets and a vector space is irreducible as an algebraic variety, it follows that there exist $k_1,\dots,k_m$ such that $J_{k_1,\dots,k_m}=J$. This means that a generic element in $J$ is of this type, which is a generic type of minimal polynomial for the Jordan algebra $J$.

\section{Classification of self-inverse spaces of small type}\label{classification_type}

In this section, we will classify all self-inverse spaces (or unitary Jordan algebras) up to dimension four. In the literature, there is a general classification, but only for simple Jordan algebras. We refer here to \cite{jordan_mechanics} and also to the monograph \cite{taste}.

Our approach will be that we fix the type of Jordan algebra, i.e., the type of minimal polynomial of its generic element. We begin by giving a classification of unitary Jordan algebras of types $(1), (2)$ and $(1,1)$ of arbitrary dimension.


\subsection{Self-inverse spaces of type $(1)$}

Clearly, in any self-inverse space, the only matrix of type $(1)$ is a multiple of the identity, the only self-inverse space of type $(1)$ is one-dimensional and it is $\mathcal L_1$.

\subsection{Self-inverse of type $(2)$}

Let $L$ be a self-inverse space for which the general matrix is of the type $(2)$. Consider the set $L_0$ of singular (therefore, nilpotent)  matrices in $L$. Since every matrix in $L$ has a minimal polynomial of degree (at most) two, we have $A^2=0$ for all $A\in L_0$. This means $\tr(A)=0$ for all $A\in L_0$. In fact, $A\in L_0\Leftrightarrow \tr(A)=0$. This shows that $L_0$ is a vector subspace of $L$ of codimension 1. Moreover, for any matrix $A\in L_0$, clearly $A^2\in L_0$, and since $A*B=((A+B)^2-A^2-B^2)/2$, it shows that $L_0$ is also closed on $*$-product.

Consider any two matrices $A,B\in L_0$. Since $0=(A+B)^2=2(A*B)$, it follows that $A*B=0$. This means that once we fix a dimension, the space $L_0$ is uniquely determined (up to an isomorphism).

The unique $n$-dimensional space of type $(2)$ is therefore

$$\Gamma_n:=\left\{\begin{pmatrix}
a&b_1&b_2&\dots&b_{n-1}\\
0&a&0&\dots&0\\
0&0&a&\dots&0\\
\vdots&\vdots&\vdots&\ddots&\vdots\\
0&0&0&\dots&a
\end{pmatrix}: a,b_1,\dots,b_{n-1}\in\CC \right\}.$$

\subsection{Self-inverse spaces of type $(1,1)$}

Since a general matrix in such a space generates space $\mathcal L_{1,1}$ we can choose a matrix $A\in L$ such that $A^2=A$.

By Theorem~\ref{FZ}, we can decompose space $L=F_A\oplus Z_A\oplus H_A$. 
Moreover, $F_A\oplus Z_A$ is a subspace that is, by Theorem \ref{splitting-by-I}, isomorphic to $F_A\times Z_A$. It follows that $F_A$ and $Z_A$ must be of type $(1)$, otherwise a general element in $F_A\oplus Z_A$ will have bigger type than $(1,1)$. It follows that $\dim (F_A)=\dim(Z_A)=1$.

Thus, it remains to describe the space $H_A$. Note that for any two matrices $C,D\in H_A$, we have $C*D\in F_A\oplus Z_A$. We will prove that $C*D$ is a multiple of the identity.

First, let us consider the case when $C$ and $D$ are linearly dependent. It is sufficient to consider $C=D$. In this case, $C^2$ is a linear combination of $\Id$ and $C$, since the minimal polynomial of $C$ is of degree two. Since $C^2\in F_A\oplus Z_A$, it follows that $C^2$ is a multiple of the identity.

Let us consider matrices $C,D\in H_A$, such that $C,D,\Id$ are independent. We will show $C*D\in \spann(\Id,C,D)$:

Clearly $(C+D)^2=C^2+D^2+2(C*D)\in \spann(\Id,C+D)\subset\spann(\Id,C,D)$. Since $C^2\in\spann(\Id,C), D^2\in\spann(\Id,D)$ we obtain that $C*D\in\spann(\Id,C,D)$. However, $\spann(\Id,C,D)\cap (F_A\oplus Z_A)=\spann(\Id)$. From this, we can conclude that $C*D$ is a multiple of the identity.

Thus, the operation $*$ on $H_A$ is defined by a symmetric bilinear form $q : H_A\times H_A\rightarrow \CC$ and putting $C*D=q(C,D)\cdot \Id$. 

By the standard theory on complex symmetric bilinear forms, we can find matrices $C_2,\dots C_k$ and $D_1,\dots D_l$ that span the set $H_A$ such that $C_i^2=\Id$, $D_i^2=0$ and $C_i*C_j=D_i*D_j=0$ for all $i\neq j$ and $C_i*D_j=0$ for all $i,j$.

It also works in the opposite direction. If we fix non-negative integers $k,l$ and define $H_A$ as a span of matrices $C_1,\dots, C_k$ and $D_1,\dots, D_l$ with the relations as above, we obtain a self-inverse space of the type $(1,1)$.

Note that if we put $C_1:=2A-\Id$, then we also have $C_1*C_i=C_1*D_j=0$ and $C_1^2=\Id$. It follows that the self-inverse space $L$ is generated by matrices $\Id,C_1,\dots,C_k, D_1,\dots D_l$, which satisfy the relations above.

This is a full description of all self-inverse spaces of type $(1,1)$, and the previous construction depends only on integers $k\ge 1,l\ge 0$. That means a choice of these two integers uniquely (up to an isomorphism) determines a self-inverse space.

We will (recursively) construct an example of such a self-inverse space for any $k,l$. We will start with the case $k=1:$
\begin{itemize}
    \item $C_1=\diag(1,1,\dots,1,-1,\dots,-1)$ with exactly $l$ ones, and $l$ minus ones.
    \item $$D_i=\begin{pmatrix}
        \mathbf{0}& \diag(0,\dots,0,1,0,\dots,0)\\
       \mathbf{0}& \mathbf{0} 
    \end{pmatrix},$$
where $1$ is on the $i$-th position.
\end{itemize}

Let as assume we have a construction of $n\times n$ matrices $C_i$, $D_j$ for $k$, we will construct such matrices $C'_i, D'_J$ of size $2n\times 2n$ for $k+1$:

\begin{itemize}
    \item $C'_{k+1}=\diag(1,1,\dots,1,-1,\dots,-1)$ with exactly $n$ ones, and $n$ minus ones.
\item $$D'_i=\begin{pmatrix}
        \mathbf{0}& D_i\\
       D_i& \mathbf{0} 
    \end{pmatrix},$$
    \item $$C'_i=\begin{pmatrix}
        \mathbf{0}& C_i\\
       C_i& \mathbf{0} 
    \end{pmatrix}.$$
\end{itemize}

It is straightforward to check that these matrices satisfy all the conditions. We will denote this space by $\mathcal C_{k,l}$.

\section{Classification of self-inverse spaces of small dimension}\label{classification_dim}

In this section, we will describe all self-inverse spaces of dimension at most four.

\subsection{Dimension 1}

Since we require that a self-inverse space contains the identity, there is only one one-dimensional self-inverse space, and that is the space generated by the identity.

\subsection{Dimension 2}

\begin{Theorem}\label{dim2}
    There exist (up to isomorphism) only two different two-dimensional self-inverse spaces: $\mathcal L_2$ and $\mathcal L_{1,1}$.
\end{Theorem}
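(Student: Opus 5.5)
The plan is to reduce to the single-generator case and then invoke Theorem~\ref{generated-by-single-matrix}. Let $L$ be a two-dimensional self-inverse space. Since $\Id\in L$ and $\dim L=2$, I would pick any $A\in L\setminus\spann(\Id)$, so that $L=\spann(\Id,A)$. Because $L$ is self-inverse, Theorem~\ref{equiv-cond-SIS} gives $A^k\in L$ for all $k$. Hence $[A]=\{f(A): f\in\CC[x]\}\subseteq L$. On the other hand $[A]$ contains $\Id$ and $A$, so it contains $L$. Therefore $L=[A]$.

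Next I would use the identification $[A]\cong \CC[x]/(m_A)$, where $m_A$ is the minimal polynomial of $A$. This gives $\dim[A]=\deg m_A$, so $\deg m_A=2$, and the degree is exactly $2$ because $A$ is not a scalar multiple of the identity. A monic quadratic either has one root of multiplicity $2$ or two distinct simple roots. In the first case $A$ has type $(2)$, and in the second it has type $(1,1)$. The proof of Theorem~\ref{generated-by-single-matrix} shows that $[A]\cong\mathcal L_{k_1,\dots,k_m}$ when $m_A$ has root multiplicities $k_1,\dots,k_m$. So $L\cong\mathcal L_2$ or $L\cong\mathcal L_{1,1}$.

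Finally I need the two spaces to be non-isomorphic and both to occur. They occur because the blocks defining $\mathcal L_2$ and $\mathcal L_{1,1}$ are explicit two-dimensional self-inverse spaces. Non-isomorphism follows from the converse direction of Theorem~\ref{generated-by-single-matrix}, since the multisets $\{2\}$ and $\{1,1\}$ differ. A direct check also works: $\mathcal L_2$ contains a nonzero element $N$ with $N*N=0$, whereas $\mathcal L_{1,1}\cong\CC\times\CC$ has no nonzero nilpotents.

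There is no real obstacle here. The only point needing care is the claim $L=[A]$, which rests on closure under powers from Theorem~\ref{equiv-cond-SIS}.
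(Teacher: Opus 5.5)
Your proposal is correct and follows the paper's proof: pick $A\in L\setminus\spann(\Id)$, note $L=[A]$ with $\deg m_A=2$, and apply Theorem~\ref{generated-by-single-matrix} both for the isomorphism and for the non-isomorphism of $\mathcal L_2$ and $\mathcal L_{1,1}$. You spell out $L=[A]$ (via closure under powers) and $\dim[A]=\deg m_A$, which the paper treats as clear. Your nilpotent-element check is a valid independent confirmation that $\mathcal L_2\not\cong\mathcal L_{1,1}$.
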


\begin{proof}
Let $L$ be a 2-dimensional self-inverse space. Clearly $\Id\in L$. Consider any matrix $A\in L$ such that $A$ is not a multiple of the identity. Then $\{A, \Id\}$ forms a basis of $L$ as a vector space. Clearly $L=[A]$ and the minimal polynomial of $A$ is of degree 2. By Theorem \ref{generated-by-single-matrix} we see that $[A]$ is isomorphic to either $\mathcal L_2$ or $\mathcal L_{1,1}$. By the same result, these two spaces are non-isomorphic, so the proof is finished.
\end{proof}

\subsection{Dimension 3}

\begin{Theorem}\label{dim3}
    There exist (up to isomorphism) only six different three-dimensional self-inverse spaces:
    
    \begin{itemize}
        \item $\mathcal L_3$, $\mathcal L_{2,1}$ and $\mathcal L_{1,1,1}$.
        \item $\Gamma_3:=\left\{\begin{pmatrix}
            a&b&c\\
            0&a&0\\
            0&0&a
        \end{pmatrix}\:\ a,b,c\in\CC\right\}.$
    \item $U_2:=\left\{\begin{pmatrix}
            a&b\\
            0&c
        \end{pmatrix}\:\ a,b,c\in\CC\right\}$ i.e the space of upper-triangular $2\times2$ matrices.
    \item $S_2:=\left\{\begin{pmatrix}
            a&c\\
            c&b
        \end{pmatrix}\:\ a,b,c\in\CC\right\}$ i.e the space of symmetric $2\times2$ matrices.
    \end{itemize}
\end{Theorem}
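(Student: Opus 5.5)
The plan is to split according to the generic type of the three-dimensional self-inverse space $L$, as defined in Section~\ref{sectype}. Let $(k_1,\dots,k_m)$ be the generic type. Then a generic $A\in L$ satisfies $[A]\subseteq L$ and $\dim[A]=k_1+\dots+k_m\le 3$. Hence the generic type is one of
\[
(3),\ (2,1),\ (1,1,1),\ (2),\ (1,1),\ (1).
\]
\begin{itemize}
\item If $k_1+\dots+k_m=3$, then $[A]=L$. By Theorem~\ref{generated-by-single-matrix}, $L$ is isomorphic to $\mathcal L_3$, $\mathcal L_{2,1}$ or $\mathcal L_{1,1,1}$.
\item Type $(1)$ is impossible, since such a space is one-dimensional.
\item If the generic type is $(2)$, the classification in Section~\ref{classification_type} says the unique $n$-dimensional space is $\Gamma_n$. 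So $L\cong\Gamma_3$.
\item If the generic type is $(1,1)$, the description there gives $L\cong\mathcal C_{k,l}$ with $\dim L=1+k+l=3$ and $k\ge1$. This leaves $(k,l)=(2,0)$ or $(1,1)$.
\end{itemize}

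Next I would identify these two spaces concretely. For $(k,l)=(2,0)$ we need $C_1,C_2$ with $C_i^2=\Id$ and $C_1*C_2=0$. The symmetric matrices $\diag(1,-1)$ and $\begin{pmatrix}0&1\\1&0\end{pmatrix}$ satisfy this, and together with $\Id$ they span $S_2$. For $(k,l)=(1,1)$ we need $C_1^2=\Id$, $D_1^2=0$ and $C_1*D_1=0$. The matrices $C_1=\diag(1,-1)$ and $D_1=E_{12}$ work, and together with $\Id$ they span $U_2$. Since the paper shows that $(k,l)$ determines the space up to isomorphism, this realizes the two cases as $S_2$ and $U_2$.

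It remains to check that the six spaces are pairwise non-isomorphic. The generic type is an isomorphism invariant, because minimal polynomials are defined intrinsically in the Jordan algebra. This separates $\mathcal L_3$, $\mathcal L_{2,1}$, $\mathcal L_{1,1,1}$, $\Gamma_3$ from each other and from the type $(1,1)$ spaces. The only remaining pair is $S_2$ versus $U_2$, both of type $(1,1)$. These are distinguished by the symmetric bilinear form $q$ on $H_A$. In $S_2$, $q$ is nondegenerate, so there are no nonzero elements $D$ of $H_A$ with $D^2=0$. In $U_2$ there is the nonzero nilpotent $E_{12}$, while $S_2$ contains no nonzero $X$ with $X^2=0$: for a symmetric $2\times2$ matrix, $X^2=0$ means $a^2+c^2=0$, $b^2+c^2=0$ and $c(a+b)=0$. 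These equations do have nonzero solutions, for instance $a=1$, $c=i$, $b=-1$. So a cleaner invariant is needed. I would use the dimension of the space spanned by nilpotent elements: in $U_2$ the nilpotents form the one-dimensional line $\CC E_{12}$, whereas in $S_2$ the nilpotents are the trace-zero matrices with determinant zero, an irreducible quadric cone spanning the two-dimensional trace-zero subspace.

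The main obstacle is making this last invariant intrinsic and justifying that $\mathcal C_{2,0}\not\cong\mathcal C_{1,1}$. Equivalently, it is justifying the claim from the previous section that the pair $(k,l)$ is an isomorphism invariant. I would argue this via the rank of the form $q$, or via the nilpotent-cone argument above, since nilpotency ($X^n=0$ in the Jordan power sense) is preserved by Jordan isomorphisms.
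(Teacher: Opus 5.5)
Your proof is correct and follows essentially the paper's route: you split by the generic type, apply Theorem~\ref{generated-by-single-matrix} when the type has degree three, and invoke Section~\ref{classification_type} to get $\Gamma_3$, $\mathcal C_{2,0}\cong S_2$ and $\mathcal C_{1,1}\cong U_2$. To separate $S_2$ from $U_2$, the paper's accompanying remark compares the locus of elements whose minimal polynomial has a double root (two planes versus one plane); this is just your nilpotent invariant shifted by scalars, and your span-of-nilpotents version is a valid intrinsic invariant because Jordan isomorphisms preserve powers.

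One slip to fix: the nilpotent cone in $S_2$ is not irreducible. It is the union of the two lines $b=-a$, $c=\pm ia$, matching the paper's two planes. These two lines still span the trace-zero plane, so your dimension count ($2$ versus $1$) is unaffected.
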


\begin{proof}
Let $L$ be a 3-dimensional self-inverse space. If $L=[A]$ for some matrix $A$, then by Theorem \ref{generated-by-single-matrix}, $L$ is isomorphic to one of the spaces $\mathcal L_3$, $\mathcal L_{2,1}$ and $\mathcal L_{1,1,1}$.

Now we may assume that every matrix in $L$ (which is not a multiple of the identity) has a minimal polynomial of degree two. Thus, it is a space of type $(1,1)$ or $(2)$. By the results of the previous section, we can see that there are two spaces of type $(1,1)$, namely $\mathcal C_{1,1}$ and $\mathcal C_{2,0}$. However, one can see that $\mathcal C_{1,1}\cong U_2$ and  $\mathcal C_{2,0}\cong S_2$.

Also, there is only one three-dimensional space of type $(2)$, and that is $\Gamma_3$.

\end{proof}

\begin{Remark} Clearly, all of the spaces which are of a different type are non-isomorphic. One can directly see that the spaces $U_2$ and $S_2$ are non-isomorphic by looking at the matrices with minimal polynomial with a double root. In the case of upper triangular matrices, this set is given by the equation $a=c$, therefore, it forms a plane in this subspace.

On the other hand, in the space of symmetric matrices, this set is given by the equation $(a+b)^2-4ab+c^2=(a-b+ic)(a-b-ic)=0$, therefore it is formed by two planes. Thus, these two spaces are non-isomorphic.
\end{Remark}

\begin{Remark}
Theorem~\ref{dim3} is a generalization of \cite[Theorem 5.1]{jordan}, when the case of symmetric matrices is treated.
\end{Remark}

\subsection{Dimension 4}

\begin{Theorem}\label{dim4}
    There exist (up to isomorphism) only 16 different four-dimensional self-inverse spaces:
    
    \begin{itemize}
        \item $\mathcal L_4$, $\mathcal L_{3,1}$, $\mathcal L_{2,2}$, $\mathcal L_{2,1,1}$, $\mathcal L_{1,1,1,1},$
        \item $U_2\times \mathcal L_1$, $S_2\times \mathcal L_1$, $\Gamma_3\times \mathcal L_1,\CC^{2\times 2},$
        \item  $\Gamma_{3,1}:=\left\{\begin{pmatrix}
            a&b&c\\
            0&a&0\\
            0&0&d
        \end{pmatrix}\:\ a,b,c,d\in\CC\right\},$
        \item  $\Gamma_{2,2}:=\left\{\begin{pmatrix}
            a&c&d\\
            0&b&c\\
            0&0&a
        \end{pmatrix}\:\ a,b,c,d\in\CC\right\},$
        \item  $\Gamma_4:=\left\{\begin{pmatrix}
            a&b&c&d\\
            0&a&0&0\\
            0&0&a&0\\
            0&0&0&a
        \end{pmatrix}\:\ a,b,c,d\in\CC\right\},$
        \item  $\Delta_4:=\left\{\begin{pmatrix}
            a&b&c&d\\
            0&a&0&b\\
            0&0&a&c\\
            0&0&0&a
        \end{pmatrix}\:\ a,b,c,d\in\CC\right\},$
        \item $\Omega_4:=\left\{\begin{pmatrix}
            a&b&c&d\\
            0&a&b&0\\
            0&0&a&0\\
            0&0&0&a
        \end{pmatrix}\:\ a,b,c,d\in\CC\right\},$
        \item $\Psi_4:=\left\{\begin{pmatrix}
            a&b&0&d\\
            c&a&d&0\\
            0&0&a&-c\\
            0&0&-b&a
        \end{pmatrix}\:\ a,b,c,d\in\CC\right\},$
        \item ${\Omega}_{2,2}:=\left\{\begin{pmatrix}
            a&c&0&0\\
            0&b&0&0\\
            0&0&a&d\\
            0&0&0&b
        \end{pmatrix}\:\ a,b,c,d\in\CC\right\}.$
    \end{itemize}
\end{Theorem}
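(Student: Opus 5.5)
We split according to the generic type $(k_1,\dots,k_m)$ of $L$, that is, the type of the minimal polynomial of a generic element; we have $\sum k_i\le 4$.

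\textbf{Generic degree $4$.} Then $L=[A]$ for a generic $A$, since $\dim[A]=4=\dim L$. Theorem~\ref{generated-by-single-matrix} gives exactly the five spaces $\mathcal L_4,\mathcal L_{3,1},\mathcal L_{2,2},\mathcal L_{2,1,1},\mathcal L_{1,1,1,1}$.

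\textbf{Generic degree $2$.} The type is $(2)$ or $(1,1)$. Section~\ref{classification_type} gives $\Gamma_4$ for type $(2)$. For type $(1,1)$ it gives $\mathcal C_{k,l}$ with $k+l=3$, $k\ge1$, namely $\mathcal C_{3,0},\mathcal C_{2,1},\mathcal C_{1,2}$. We identify these by exhibiting explicit bases satisfying the relations $C_i^2=\Id$, $D_j^2=0$, all mixed products zero:
\begin{itemize}
\item $\mathcal C_{3,0}\cong S_2\times\mathcal L_1$ is wrong, since that space has generic type $(1,1,1)$. Instead, $\mathcal C_{3,0}$ should be the spin factor $\Psi_4$; I would check its relations directly.
\item $\mathcal C_{1,2}\cong\Omega_{2,2}$.
\item $\mathcal C_{2,1}$ is obtained from the recursive construction. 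I would locate it in the list by computing the quadratic form $q$ on $H_A$ (rank $2$, nullity $1$), which should match $\Delta_4$ or a variant.
\end{itemize}

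\textbf{Generic degree $3$.} The type is $(3)$, $(2,1)$ or $(1,1,1)$. Here we use a Peirce argument. If the type is $(2,1)$ or $(1,1,1)$, a generic element gives a nontrivial idempotent $I$, by projecting onto a generalized eigenspace. Then $L=F_I\oplus Z_I\oplus H_I$.
\begin{itemize}
\item If $H_I=0$, Corollary~\ref{iso} gives $L\cong F_I\times Z_I$ with dimensions $(3,1)$ or $(2,2)$. Theorems~\ref{dim2} and~\ref{dim3}, discarding products of generic degree $4$, yield $U_2\times\mathcal L_1$, $S_2\times\mathcal L_1$, $\Gamma_3\times\mathcal L_1$ (this is $\Gamma_{3,1}$? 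No: $\Gamma_{3,1}$ is isomorphic to $\Gamma_3\times\mathcal L_1$ only if $H=0$, which needs checking), as well as $\mathcal L_2\times\mathcal L_2=\mathcal L_{2,2}$, which is excluded.
\item If $H_I\neq0$, choose $I$ to be a primitive idempotent, i.e. with $\dim F_I$ minimal. Then $\dim H_I\le 2$, and the relations $H_I*H_I\subseteq F_I\oplus Z_I$ with the type constraint force $\CC^{2\times2}$ when $F_I$ and $Z_I$ are both $\mathcal L_1$ with $\dim H_I=2$ and the form is nondegenerate. Otherwise we get $\Gamma_{3,1}$, $\Gamma_{2,2}$, or the generic-degree-$2$ spaces already found.
\end{itemize}
For type $(3)$, the nilpotent elements form an ideal $N$ of codimension $1$ on which $X^3=0$. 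We classify $3$-dimensional Jordan products on $N$ with $N^{*3}=0$ and $N*N\ne0$: either $\dim N*N=1$ or $2$, with the structure constants determined by a symmetric bilinear map $N/N^2\times N/N^2\to N^2$. This should give $\Omega_4$, $\Delta_4$, and possibly $\Gamma_{2,2}$-like cases.

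\textbf{Completeness and distinctness.} Finally, non-isomorphism is shown by invariants:
\begin{itemize}
\item generic type;
\item the decomposition into irreducibles (Theorem~\ref{uniqueness});
\item the dimension of the nilradical and of its square;
\item the geometry of the degenerate locus, as in the $U_2$ versus $S_2$ Remark.
\end{itemize}

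\textbf{Main obstacle.} I expect the hardest step to be the generic-degree-$3$ case: making sure the $H_I\neq0$ and type-$(3)$ subcases produce exactly $\Gamma_{3,1},\Gamma_{2,2},\Delta_4,\Omega_4,\CC^{2\times 2}$, with no omissions and no duplicates. I would attack it by fixing a basis adapted to $\Id,I$ (or to $\Id$ and a generic nilpotent) and solving the Jordan identity on the few remaining structure constants.
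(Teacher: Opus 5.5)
Your generic-degree-$4$ and type-$(2)$ cases agree with the paper, but your generic-degree-$2$ identifications are wrong, and the error propagates. In $\mathcal C_{k,l}$ the product on $V=\spann(C_1,\dots,C_k,D_1,\dots,D_l)$ is $X*Y=q(X,Y)\Id$, with $q$ of rank $k$ and nullity $l$.

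\begin{itemize}
\item For $\CC^{2\times 2}$, $V$ is the space of trace-zero matrices and $q$ is nondegenerate. So $\mathcal C_{3,0}\cong\CC^{2\times 2}$, not $\Psi_4$.
\item In $\Psi_4$, let $B,C,D$ be the matrices with $b=1$, $c=1$, $d=1$ respectively and the other parameters $0$. Then $B^2=C^2=D^2=0$, $B*C=\tfrac12\Id$ and $B*D=C*D=0$. So $q$ has rank $2$ and nullity $1$, and $\Psi_4\cong\mathcal C_{2,1}$.
\item Conversely, $\mathcal C_{2,1}$ cannot be ``$\Delta_4$ or a variant''. Every element of $\Delta_4$ has the single eigenvalue $a$, and its nilpotent part $N$ satisfies $N^2=(b^2+c^2)E_{14}$ (matrix units $E_{ij}$). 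So $\Delta_4$ has generic type $(3)$.
\end{itemize}

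By the same token, $\CC^{2\times 2}$ cannot arise in your generic-degree-$3$ case, since all its elements have minimal polynomial of degree at most $2$. In fact $\dim H_I=2$ never occurs there. If $F_I=\CC I$, $Z_I=\CC(\Id-I)$ and $h=\bigl(\begin{smallmatrix}0&u\\ v&0\end{smallmatrix}\bigr)\in H_I$, then $uv=\alpha\Id$ and $vu=\beta\Id$. Hence $\alpha u=uvu=\beta u$ and $\alpha v=vuv=\beta v$, so $h^2\in\CC\Id$, and $L$ would have generic degree $2$. As written, your count puts $\CC^{2\times2}$ in the wrong case and lists $\Delta_4$ twice. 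This contradicts the generic-type invariant you yourself invoke for distinctness.

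Second, the part of the theorem that needs real work is only asserted (``Otherwise we get $\Gamma_{3,1}$, $\Gamma_{2,2}$'', ``This should give $\Omega_4$, $\Delta_4$''). This is exactly where the paper spends its effort. It takes two generic elements $X,Y$ with $[X]\cap[Y]$ two-dimensional and uses the $B+kC$ trick. In type $(2,1)$ this yields $B*C=aA+\tfrac12B+\tfrac12C+d\Id$, then $d=0$, and the split $a=0$ ($\Gamma_{3,1}$) versus $a\neq0$ ($\Gamma_{2,2}$). In type $(3)$ it yields $B*C=aA$, with $a\neq\pm1$ giving $\Delta_4$ and $a=\pm1$ giving $\Omega_4$.

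Your Peirce route can replace this, but it must be organized by irreducibility rather than by one chosen $I$. A primitive idempotent of a reducible $L$ can have $H_I\neq0$, e.g.\ $I=E_{11}$ in $U_2\times\mathcal L_1$. For irreducible $L$ of generic degree $3$ and nontrivial $I$, you have $\dim H_I=1$ and may take $F_I=\CC I$. You then need to show three things:
\begin{itemize}
\item $Z_I\cong\mathcal L_{1,1}$ would produce an idempotent with vanishing $H$, so $Z_I=\spann(\Id-I,n)$ with $n^2=0$;
\item $n*h=0$;
\item $h^2=\gamma n$.
\end{itemize}
Then $\gamma=0$ gives $\Gamma_{3,1}$ and $\gamma\neq0$ gives $\Gamma_{2,2}$.

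Your nilpotent-ideal approach to type $(3)$ is a cleaner alternative to the paper's and does close. The case $\dim N*N=2$ is impossible: then $N=\CC x+N*N$ is generated by $x$, so $N=\spann(x,x^2)$. Hence $N*N=\CC A$, and one checks $A*N=0$. Then $L$ is determined by a nonzero symmetric form $W\times W\to\CC A$ on a complement $W$, of rank $2$ ($\Delta_4$) or rank $1$ ($\Omega_4$). $\Gamma_{2,2}$ cannot appear here, since its elements have two eigenvalues.

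Finally, your doubt about $\Gamma_{3,1}$ versus $\Gamma_3\times\mathcal L_1$ is easily settled. The nontrivial idempotents of $\Gamma_{3,1}$ are $E_{33}+tE_{13}$ and $\Id-E_{33}-tE_{13}$, all with $H\neq0$. So $\Gamma_{3,1}$ is irreducible, while $\Gamma_3\times\mathcal L_1$ is not.
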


\begin{proof}
We will again proceed by casework, as in the lower dimensions. First, we will look at the type of a generic matrix in the self-inverse space $L$.

In the case where generic matrix $A$ has minimal polynomial of degree 4 we have $L=[A]$ and, by Theorem $\ref{generated-by-single-matrix}$, we have that $L$ is isomorphic to one of the spaces $\mathcal L_4$, $\mathcal L_{3,1}$, $\mathcal L_{2,2}$ and $\mathcal L_{2,1,1}$, $\mathcal L_{1,1,1,1}$.

Let us assume that the generic matrix in $L$ is of type $(1,1,1)$ (i.e., its minimal polynomial has three distinct roots).

Consider any two matrices $X,Y\in L$ of type $(1,1,1)$ and $[X]\neq[Y]$. Then $[X]\cap[Y]$ is a self-inverse space. Since $\dim ([X])=\dim([Y])=3$, we must have $4\ge \dim([X]+[Y])>3$, i.e. $\dim([X]+[Y])=3$. It follows that $\dim([X]\cap[Y])=2$.
Since there are no matrices that have multiple roots of the minimal polynomial in $\mathcal L_{1,1,1}$, we must have $[X]\cap[Y]\cong L_{1,1}$.

Note that $[X]\cong \mathcal L_{1,1,1}$  which is the space of $3\times 3$ diagonal matrices, i.e. $\mathcal L_{1,1,1}=\{\diag(x,y,z)\}$. Moreover $\mathcal L_{1,1,1}\cap\mathcal M_{1,1}$ consist of three planes - $x=y; y=z;x=z$ Clearly, one of these planes is $[X]\cap [Y]$, without loss of generality we may assume it is $y=z$. Let $A,B\in [X]$ be a matrices that correspond to $\diag(1,0,0)$ and $\diag(0,1,0)$.  Clearly, $\{\Id, A,B\}$ generates $[X]$ and we have $A^2=A, B^2=B$ $AB+BA=0$.

Analogously, we pick the basis  $\{\Id,A,C\}$ for $[Y]$ with the property that $C^2=C, AC+CA=0$ (it is the same matrix $A$, since $A\in [X]\cap [Y])$.

Clearly, $\{\Id,A,B,C\}$ is a basis for $L$.  Note that $B,C,\Id-A\in Z_A$ and $A\in F_A$. Since $F_A\cap Z_A=0$, it means that $B,C,\Id-A$ for a basis of $Z_A$ and $A$ is a basis of $F_A$ and $F_A\oplus Z_A=L$. By Theorem \ref{splitting-by-I} we have $L\cong F_A\times Z_A\cong \mathcal L_1\times Z_A$, where $Z_A$ is a self-inverse space of dimension 3. 

Since we know all six self-inverse spaces of dimension 3, we can see that, in this case, $L$ is isomorphic to one of the six products. Actually, we can even argue that in this case (generic element is of type $(1,1,1)$) we must have that $Z_A$ ie, isomorphic to either all symmetric $2\times 2$ matrices or all upper triangular $2\times 2$ matrices, but it is not necessary.

\textbf{General matrix in $L$ is of type $(2,1)$:}

Consider any two matrices $X,Y\in L$ of type $(2,1)$ and $[X]\neq[Y]$. Then $[X]\cap[Y]$ is a self-inverse space. Analogously, as in the previous case we have $\dim ([X])=\dim([Y])=3$ and $\dim (L)=4$. It follows that $\dim([X]\cap[Y])=2$.

Thus $[X]\cap [Y]$ is isomorphic to either $\mathcal L_2$ or $\mathcal L_{1,1}$. We will distinguish these two cases. We start with the case $\mathcal L_{1,1}$.

Note that the intersection $\mathcal M_{1,1}\cap \mathcal L_{2,1}$ is formed by the diagonal matrices from $\mathcal L_{2,1}$. Thus, let $A\in [X]$ be a matrix that corresponds to $\diag (0,0,1)$ and let $B\in [X]$ be a matrix that corresponds to the matrix $\begin{pmatrix}
 0&1&0\\
 0&0&0\\
 0&0&0
\end{pmatrix}$. 

Clearly $\{A,B,\Id\}$ form a basis of $[X]$ and we have $A^2=A, B^2=0,A*B=0$. Analogously, we find a matrix $C$ such that $\{A,C,\Id\}$ is a basis of $[Y]$ and we have $C^2=0$ and $A*C=0$.

We note that $B,C,\Id-A\in Z_A$, and, analogously as in the previous case, we can conclude that $L=Z_A\oplus F_A$. By Theorem \ref{splitting-by-I}, we have $L\cong \mathcal L_1\times Z_A$, where $Z_A$ is a three-dimensional self-inverse space. Since a general matrix in $L$ is of type $(2,1)$, a general matrix in $Z_A$ is of type $(2)$, thus $Z_A\cong \Gamma_3$.

We return to the case $([X]\cap[Y])\cong \mathcal L_{2}$. We note that this must hold for any matrices $X,Y$, otherwise the situation was already discussed in the previous case. We note that the intersection $\mathcal M_{2}\cap \mathcal L_{2,1}$ is 2-dimensional subspace given by matrices that have all three diagonal elements equal. 

Let $A\in[X]$ be a matrix that corresponds to $\begin{pmatrix}
 0&1&0\\
 0&0&0\\
 0&0&0
\end{pmatrix}$ and $B\in[X]$ be a matrix that corresponds to $\diag(0,0,1)$. Then $\{A,B,\Id\}$ is a basis of $[X]$, $\{A,\Id\}$ is a basis of $[X]\cap [Y]$ and we have $A^2=A*B=0, B^2=B$.

Analogously, we pick an element $C\in[Y]$ such that $\{A,C,\Id\}$ is a basis of $[Y]$ and $A*C=0, C^2=C$.

Let us consider a matrix $B+kC$, for any $k\in \CC\setminus{0}$. Consider the space $[B+kC]$.  Suppose that this space is 3-dimensional. Since a generic matrix in $L$ is of a type $(2,1)$ it follows that $B+kC$ is of a type $(2,1)$ or $(3)$. Let us consider the intersection $[B+kC]\cap[X]$. Clearly, it is a 2-dimensional space and we must have $[B+kC]\cap [X]\cong \mathcal L_2$.  Thus $[B+kC]\cap [X]=[A]$.

It follows that $A\in [B+kC]$ and $(B+kC)^2\in \spann(A,\Id, B+kC)$. Note that this is true even in the case where $b+kC$ has the minimal polynomial of degree 2.

Let us write product $B*C$ as a linear combination: $B*C=aA+bB+cC+d\Id$. 
Then $(B+kC)^2=B+k^2C+2k\cdot(aA+bB+cC+d\Id)$. Thus $(1+2kb)B+(k^2+2kc)C\in \spann(B+kC)$. This is equivalent to $k(1+2kb)=k^2+2kc$. Since this must be true for all $k\neq 0$, it must be a polynomial identity, i.e., $b=c=1/2$.

Let us consider the decomposition of the vector space $L$ as $L=F_B\oplus Z_B\oplus H_B$ as in Theorem \ref{FZ}. We have $B\in F_B$ and $\Id-B, C\in Z_B$. Clearly $H_B\neq 0$, otherwise the space $L$ decomposes, and this situation was already handled in the previous cases. (One can even show that $H_B=0$ would contradict $B*C=aA+B/2+C/2+dI)$ but it is not necessary. It follows that $\dim(H_B)=\dim (F_B)=1, \dim(Z_B)=2$. Let $H$ be a generator of $H_B$. By Theorem \ref{FZ}, $H*A\in H_B$. However, we can easily see that $A*L\subseteq \spann(A)$. It follows that $H*A=0$. Therefore $H\in\spann(A,B,C)$. We have $B*H=H/2\in\spann(A,B,C)$. Since $A,B,\Id,H$ spans $L$ we have $B*L\subseteq \spann(B*A,B*B,B*\Id,B*H)\subseteq\spann(A,B,C)$. In particular, we have $B*C\in\spann (A,B,C)$ or, in other words, $d=0$.

We have two options $a=0$ or $a\neq 0$. In the case $a\neq 0$, we can replace $A$ by a suitable multiple of $A$ to obtain the same situation with $a=1$. This uniquely (up to isomorphism) determines the space $L$. We will provide examples of such spaces.

Let $A=\begin{pmatrix}
 0&1&0\\
 0&0&0\\
 0&0&0
\end{pmatrix}, 
B=\begin{pmatrix}
 0&0&1\\
 0&0&0\\
 0&0&1
\end{pmatrix},
C=\begin{pmatrix}
 0&0&0\\
 0&0&0\\
 0&0&1
\end{pmatrix}.$

We can see that these matrices satisfy all the relations and $B*C=B/2+C/2$. These matrices generate the space $\Gamma_{3,1}$.

For the other example, let $A=\begin{pmatrix}
 0&0&-1/2\\
 0&0&0\\
 0&0&0
\end{pmatrix}, 
B=\begin{pmatrix}
 0&0&0\\
 0&1&0\\
 0&0&0
\end{pmatrix},
C=\begin{pmatrix}
 0&1&1\\
 0&1&1\\
 0&0&0
\end{pmatrix}.$

In this case, we have $B*C=A+B/2+C/2$, i.e., $a=1$. These matrices generate the space $\Gamma_{2,2}$.

Note that $\Gamma_{3,1}\cap \mathcal M_{1,1}$ is given by the equation $b(a-d)=0$,i.e. it consists of 2 planes. On the other hand, $\Gamma_{2,2}\cap \mathcal M_{1,1}$ is given by the equation $c^2+ad-bd$, i.e., it is a quadric. Thus $\Gamma_{3,1}\not\cong \Gamma_{2,2}$.

\textbf{General matrix in $L$ is of type $(3)$:}

First, we notice that $\mathcal L_3\cap\mathcal M_{1,1}=\spann(\Id)$ and $\mathcal L_3\cap \mathcal M_2$ is formed by the plane where elements on the second diagonal are equal to 0.

Now we consider two matrices $X,Y\in L$ of type $(3)$ with $[X]\neq [Y]$. Then $[X]\cap [Y]$ is a 2-dimensional self-inverse space, and it must be isomorphic to $\mathcal L_2$, since the space $[X]\cong\mathcal L_3$ does not contain a subspace isomorphic to $\mathcal L_{1,1}$.

Let $A$ be a matrix from $[X]$ that corresponds to $\begin{pmatrix} 0&0&1\\
 0&0&0\\
 0&0&0
\end{pmatrix}$, and let $B$ be a matrix from $X$ that corresponds to $\begin{pmatrix}0&1&0\\
 0&0&1\\
 0&0&0
\end{pmatrix}$. We have $A^2=0, B^2=A, A*B=0$. Moreover, $[X]\cap [Y]=[A]$. Then we, analogously, find an element $C\in[Y]$ such that $C^2=A$, $A*C=0$ and $\{A,C,\Id\}$ is a basis of $[Y]$.

If we consider the element $B+kC$ for any $k\in\CC\setminus\{0\}$, then, as in the previous case, we must have that $(B+kC)^2\in\spann(A,\Id,B+kC)$. If we denote $B*C=aA+bB+cC+d\Id$, we obtain $(B+kC)^2=A+k^2A+2k(aA+bB+cC+d\Id)$. It follows that $bB+cC\in\spann(B+kC)$ for all $k\in\CC\setminus\{0\}$, i.e. $b=c=0$.  Let us pick $k_0$ as a root of the quadratic equation $1+k^2+2ka=0$. Then $(B+k_0C)^2=2k_0d\Id$. It follows that the minimal polynomial for $B+k_0C$ is $x^2-2k_0d$. However, since $L\subseteq \mathcal M_3$, there are no elements of type $(1,1)$ in $L$. It follows $2k_0d=0$, i.e $d=0$.

Let us assume $a\neq \pm1$. Then the quadratic equation $k^2+2ka+1=0$ has two different roots $k_0$ and $k_1$ and $\{A,\Id, B+k_0C,B+k_1C\}$ for a basis of $L$. Moreover, we have $A^2=A*(B+k_0C)=A*(B+k_1C)=(B+k_0C)^2=(B+k_1C)^2=0$ and $(B+k_0C)*(B+k_1C)=
A+(k_0+k_1)aA+k_0k_1A=(1-2a^2+1)A=2(1-a^2)*A$.

If we denote $B'=B+k_0C$ and $C'=(B+k_1C)/(2-2a^2)$, we have a basis $\{A,B',C',\Id\}$ of $L$ with the property $A^2=A*B'=A*C'=(B')^2=(C')^2=0, B'*C'=A$ which determines the space $L$ up to an isomorphism. Thus, we see that for any value $a\neq \pm 1$ we always get an isomorphic space.

We will provide an example in the original basis $\{A,B,C,\Id\}$ with $a=0$:

$$A=\begin{pmatrix}0&0&0&1\\
 0&0&0&0\\
 0&0&0&0\\
 0&0&0&0
\end{pmatrix}, 
B=\begin{pmatrix}0&1&0&0\\
 0&0&0&1\\
 0&0&0&0\\
 0&0&0&0
\end{pmatrix},
C=\begin{pmatrix}0&0&1&0\\
 0&0&0&0\\
 0&0&0&1\\
 0&0&0&0
\end{pmatrix}.$$

These matrices generate the space $\Delta_4$.

If $a=-1$, we can simply replace $B$ with $-B$ to get to the case $a=1$, so we can assume $a=1$. This, again, determines the space uniquely. We will provide an example of matrices $A,B,C$ that generate such a space:

$$A=\begin{pmatrix}0&0&1&0\\
 0&0&0&0\\
 0&0&0&0\\
 0&0&0&0
\end{pmatrix}, 
B=\begin{pmatrix}0&1&0&0\\
 0&0&1&0\\
 0&0&0&0\\
 0&0&0&0
\end{pmatrix},
C=\begin{pmatrix}0&1&0&1\\
 0&0&1&0\\
 0&0&0&0\\
 0&0&0&0
\end{pmatrix}.$$

These matrices generate the space $\Omega_4$.

Note that the intersection $\Delta_4\cap\mathcal M_2$ is given by the equation $b^2+c^2=0$, i.e., it is the union of two (complex) planes. On the other hand, the intersection $\Omega_4\cap \mathcal M_2$ is given by the equation $b=0$, i.e., it is a plane. Therefore, $\Omega_4\not\cong \Delta_4$.

\textbf{General matrix in $L$ is of type $(1,1)$ or $(2)$:}

This case was already discussed in Section~\ref{classification_type}. There are three 4-dimensional spaces of type $(1,1)$, namely $\mathcal C_{3,0}\cong \CC^{2\times 2}$,
$\mathcal C_{2,1}\cong\Psi_4$, $\mathcal C_{1,2}\cong\Omega_{2,2}$ and there is one space of type $(2)$ and that is $\Gamma_4$.

\end{proof}
\begin{Remark}
One can again directly check that the spaces $\CC^{2\times 2}, \Psi_4$ and $\Omega_{2,2}$ are non-isomorphic by looking at the set of matrices whose polynomial has a double root.

In the space $\CC^{2\times 2}$, this set is a smooth quadric. In the space $\Psi_4$, it is a union of two planes $b=0$ and $c=0$. In the space $\Omega_{2,2}$, it is a plane given by $a=b$. 
\end{Remark}

\section{Future directions}

We classified all self-inverse spaces up to dimension four. It is clear that if we increase the dimension, a similar analysis will get much more complicated. Maybe it can be simplified, or at least one can find an estimate on the number of different self-inverse spaces. We propose the following questions:

\begin{Question}
Classify all the different self-inverse spaces in higher dimensions.
\end{Question}

\begin{Question}
Let us fix the dimension $d$. Is it true that there are only finitely many $d$-dimensional self-inverse spaces (up to isomorphism)?
\end{Question}
\begin{Question}
If the answer to the previous question is positive, how many $d$-dimensional self-inverse spaces are there? If the precise answer is too difficult to obtain, can one find at least an asymptotic behaviour of the number of self-inverse spaces?
\end{Question}

\vspace{.1in}
\section*{Acknowledgement}
RD was supported by a Return Humboldt fellowship and by a grant of the Ministry of Education and Research, CNCS-UEFISCDI, project number PN-IV-P2-2.1-BSM-2024-0022, within PNCDI IV.

\end{document}